\newcommand*\tasklabelformat[1]{#1)}
\numberwithin{equation}{section}
\newtheorem*{rep@theorem}{\rep@title}
\newcommand{\newreptheorem}[2]{%
\newenvironment{rep#1}[1]{%
 \def\rep@title{#2 \ref{##1}}%
 \begin{rep@theorem}}%
 {\end{rep@theorem}}}
\theoremstyle{theorem}
\newtheorem{thm}{Theorem}[section]
\newtheorem*{thm*}{Theorem}
\theoremstyle{definition}
\newtheorem{prop}[thm]{Proposition}
\newtheorem*{prop*}{Proposition}
\newtheorem{lem}[thm]{Lemma}
\newtheorem*{cor*}{Corollary}
\theoremstyle{remark}
\newtheorem{rem}[thm]{Remark}
\title{\vspace*{-1.5cm} Lower bounds on fibered Yang-Mills functionals:
\\
generic nefness and semistability of direct images
%}
}
\author
{Siarhei Finski
}
\date{}
\newcommand{\imun} {\sqrt{-1}}
\newcommand{\sym}{{\rm{Sym}}}
\newcommand{\comp}{\mathbb{C}}
\newcommand{\real}{\mathbb{R}}
\newcommand{\nat}{\mathbb{N}}
\newcommand{\integ}{\mathbb{Z}}
\newcommand{\enmr}[1]{\text{End}{(#1)}}
\newcommand{\ccal}{\mathscr{C}}
\newcommand{\dbar}{ \overline{\partial} }
\newcommand{\rk}[1]{{\rm{rk}} ( #1 )}
\newcommand{\tr}[1]{{\rm{Tr}} \big[ #1 \big]}
\newcommand{\scal}[2]{\langle #1, #2 \rangle}
\newcommand{\td}{{\rm{Td}}}
\newcommand{\ch}{{\rm{ch}}}
\DeclareFontFamily{OMX}{MnSymbolE}{}
\DeclareSymbolFont{MnLargeSymbols}{OMX}{MnSymbolE}{m}{n}
\DeclareFontShape{OMX}{MnSymbolE}{m}{n}{
    <-6>  MnSymbolE5
   <6-7>  MnSymbolE6
   <7-8>  MnSymbolE7
   <8-9>  MnSymbolE8
   <9-10> MnSymbolE9
  <10-12> MnSymbolE10
  <12->   MnSymbolE12
}{}
\DeclareFontShape{OMX}{MnSymbolE}{b}{n}{
    <-6>  MnSymbolE-Bold5
   <6-7>  MnSymbolE-Bold6
   <7-8>  MnSymbolE-Bold7
   <8-9>  MnSymbolE-Bold8
   <9-10> MnSymbolE-Bold9
  <10-12> MnSymbolE-Bold10
  <12->   MnSymbolE-Bold12
}{}
\let\llangle\@undefined
\let\rrangle\@undefined
\DeclareMathDelimiter{\llangle}{\mathopen}%
                     {MnLargeSymbols}{'164}{MnLargeSymbols}{'164}
\DeclareMathDelimiter{\rrangle}{\mathclose}%
                     {MnLargeSymbols}{'171}{MnLargeSymbols}{'171}
\DeclareMathOperator*{\esssup}{ess\,sup}
\DeclareMathOperator*{\essinf}{ess\,inf}
\newenvironment{sciabstract}{}
\begin{document}

\maketitle

\begin{sciabstract}
  \textbf{Abstract.}
	The main goal of this paper is to generalize a part of the relationship between mean curvature and Harder-Narasimhan filtrations of holomorphic vector bundles to arbitrary polarized fibrations. 
	More precisely, for a polarized family of complex projective manifolds, we establish lower bounds on a fibered version of Yang-Mills functionals in terms of the Harder-Narasimhan slopes of direct image sheaves associated with high tensor powers of the polarization.
	We discuss the optimality of these lower bounds and, as an application, provide an analytic characterisation of a fibered version of generic nefness.
	As another application, we refine the existent obstructions for finding metrics with constant horizontal mean curvature. 
	The study of the semiclassical limit of Hermitian Yang-Mills functionals lies at the heart of our approach.
\end{sciabstract}

\pagestyle{fancy}
\lhead{}
\chead{Lower bounds on fibered Yang-Mills functionals}
\rhead{\thepage}
\cfoot{}

%\fancypagestyle{mypagestyle}{%
%  \fancyhf{}% Clear header/footer
%  \fancyhead[OC]{An Author}% Author on Odd page, Centred
%  \fancyhead[EC]{A titlesdfdsfdsfds}% Title on Even page, Centred
%  \fancyfoot[C]{\thepage}%
%  \renewcommand{\headrulewidth}{.4pt}% Header rule of .4pt
%}
%\pagestyle{mypagestyle}

\newcommand{\Addresses}{{% additional braces for segregating \footnotesize
  \bigskip
  \footnotesize
  \noindent \textsc{Siarhei Finski, CNRS-CMLS, École Polytechnique F-91128 Palaiseau Cedex, France.}\par\nopagebreak
  \noindent  \textit{E-mails }: \texttt{finski.siarhei@gmail.com} $\quad$ or  $\quad$  \texttt{siarhei.finski@polytechnique.edu}.
}} 

\vspace*{0.25cm}

\par\noindent\rule{1.25em}{0.4pt} \textbf{Table of contents} \hrulefill

\vspace*{-1.5cm}

\tableofcontents

\vspace*{-0.2cm}

\noindent \hrulefill

%\vspace*{-0.5cm}

\section{Introduction}\label{sect_intro}
	Consider a holomorphic submersion $\pi : X \to B$ between compact complex manifolds $X$ and $B$ of dimensions $n + m$ and $m$ respectively, $n, m \in \nat$.
	Let $L$ be a holomorphic line bundle over $X$, which is relatively ample with respect to $\pi$.
	We fix a \textit{Gauduchon Hermitian form} $\omega_B$ on $B$, i.e. a positive $(1, 1)$-form, such that $\partial \dbar \omega_B^{m - 1} = 0$, see \cite{Gauduchon}.
	The main goal of this paper is to study the relationship between the so-called horizontal mean curvature of the fibration, which is a certain differential-geometric invariant of the family defined using $\omega_B$, and Harder-Narasimhan $\omega_B$-slopes of direct images $E_k := R^0 \pi_* L^k$, which are algebraic invariants.
	\par 
	More precisely, consider a Hermitian metric $h^L$ on $L$, which is positive along the fibers of $\pi$.
	We denote by $\omega(h^L) := \frac{\imun}{2 \pi} R^L$ the first Chern form of $(L, h^L)$, where $R^L$ is the curvature of the Chern connection.
	When $h^L$ is clear from the context, we omit it from the above notation.
	\par
	As $\omega$ is positive along the fibers, it provides a (smooth) decomposition of the tangent space $TX$ of $X$ into the vertical component $T^V X$, corresponding to the tangent space of the fibers, and the horizontal component $T^H X$, corresponding to the orthogonal complement of $T^V X$ with respect to $\omega$.
	The form $\omega$ then decomposes as $\omega = \omega_V + \omega_H$, $\omega_V \in \ccal^{\infty}(X, \wedge^{1, 1} T^{V*} X)$, $\omega_H \in \ccal^{\infty}(X, \wedge^{1, 1} T^{H*} X)$. 
	Upon the natural identification of $T^H X$ with $\pi^* TB$, we may view $\omega_H$ as an element from $\ccal^{\infty}(X, \wedge^{1, 1} \pi^* T^* B)$.
	The triple $(\pi, \omega, T^H X)$ then defines a \textit{Kähler fibration} in the sense of \cite[Definition 1.4]{BGS2}.
	We define the \textit{horizontal mean curvature}, $\wedge_{\omega_B} \omega_H(h^L) \in \ccal^{\infty}(X)$, as
	\begin{equation}\label{eq_ma_fhe}
		\wedge_{\omega_B} \omega_H(h^L) := \frac{\omega_H(h^L) \wedge \omega_B^{m - 1}}{\omega_B^m}.
	\end{equation}
	\par 
	We say that $h^L$ is \textit{fibered Einstein} if $\wedge_{\omega_B} \omega_H(h^L)$ is a constant.
	By decomposition into horizontal and vertical components, it is easy to see that this condition is equivalent to
	\begin{equation}\label{eq_FE}
		\omega(h^L)^{n + 1} \wedge \pi^* \omega_B^{m - 1} = c \cdot \omega(h^L)^n \wedge \pi^* \omega_B^m,
	\end{equation}
	where $c$ is a constant.
	By integrating (\ref{eq_FE}), we see that the constant $c$ is independent of $h^L$, since $\omega_B$ is Gauduchon, see Section \ref{sect_num_obs} for details.
	Remark the similarity of (\ref{eq_FE}) with the J-equation if $m = 1$, and with optimal symplectic connection equation of Dervan-Sektnan, see \cite[Proposition 2.7]{DervSektUniqueOSC}, if the fibers are Fano.
	For families of manifolds given by projectivizations of vector bundles, the condition (\ref{eq_FE}) was introduced by Kobayashi \cite{KobFinEins}, who called such metrics \textit{Finsler-Einstein} metrics; then Feng-Liu-Wan \cite{FengLiuWang} generalized it for general Kähler submersions, and called such metrics \textit{geodesic Einstein} metrics.
	If instead of a closed manifold $B$, one considers manifolds with boundary, the equation (\ref{eq_FE}) was studied extensively in the past: if $B$ is a 1-dimensional annuli and $c = 0$, this is a geodesic equation in Mabuchi space \cite{Mabuchi}, see Semmes \cite{Semmes} or Donaldson \cite{DonaldSymSp}, if $B$ is a bounded smooth strongly pseudoconvex domain in $\comp^n$, $c = 0$ and $\omega_B$ is the standard Kähler form, (\ref{eq_FE}) was called Wess-Zumino-Witten equation in \cite{DonaldSymSp} due to its connection with \cite{WittenWZW}.
	\par
	Remark that in the important case when $X := \mathbb{P}(E^*)$ for some holomorphic vector bundle $E$ over $B$, $L := \mathscr{O}(1)$, and $h^L$ is induced by a Hermitian metric $h^E$ on $E$, $(L, h^L)$ is fibered Einstein if and only if $(E, h^E)$ is Hermite-Einstein, cf. Remark \ref{eq_he_fe_rel} for details.
	\par 
	The first main observation of this paper is that the correspondence between fibered Einstein and Hermite Einstein equations is much tighter. 
	Indeed, as we shall see, relying on the work of Ma-Zhang \cite{MaZhangSuperconnBKPubl}, cf. Theorem \ref{thm_mazh}, the fibered Einstein equation for $L$ is \textit{mutatis mutandis} the semi-classical limit (i.e. $k \to \infty$) of the Hermite-Einstein equation for $E_k := R^0 \pi_* L^k$. 
	Let us now explain the first manifestation of this correspondence.
	\par 
	\begin{sloppypar}
	Recall that a \textit{slope} (or $\omega_B$-slope) of a coherent sheaf $\mathscr{E}$ over $B$ is defined as $\mu(\mathscr{E}) := \deg(\mathscr{E}) / \rk{\mathscr{E}}$, where the degree, $\deg(\mathscr{E})$, is given by $\deg(\mathscr{E}) := \int_B [c_1(\det \mathscr{E})] \cdot [\omega_B^{m - 1}]$; here and after the intersection product is for Bott-Chern and Aeppli cohomology classes, $\omega_B^{m - 1}$ represents an Aeppli cohomology class since $\omega_B$ is Gauduchon, see Section \ref{sect_num_obs} for details, and $\det \mathscr{E}$ is the Knudsen-Mumford determinant of $\mathscr{E}$, see \cite{Knudsen1976}.
	A torsion-free coherent sheaf $\mathscr{E}$ is called \textit{semistable} or $\omega_B$-\textit{semistable} if for every coherent subsheaf $\mathcal{F}$ of $\mathscr{E}$, verifying $\rk{\mathcal{F}} > 0$, we have $\mu(\mathcal{F}) \leq \mu(\mathscr{E})$.
	When $\dim B = 1$, these notions clearly do not depend on $\omega_B$.
	\begin{comment}
	The next result shows that the fibered Einstein condition is tightly related with slope semistability of $E_k$.
	To readers familiar with Kobayashi-Hitchin correspondence, see Narasimhan-Seshadri \cite{NarSesh}, Donaldson \cite{DonaldASD} and Uhlenbeck-Yau \cite{UhlYau}, this will sound very natural.  
	Indeed, as Kobayashi-Hitchin correspondence relates the existence of approximate Hermite-Einstein metrics with slope semistability, by the mentioned correspondence between the horizontal mean curvature of $L$ and the mean curvature of $E_k$, $k \in \nat$, the existence of approximate fibered Einstein metric should be related with asymptotic semistability of $E_k$.
	\end{comment}
	\par 
	\begin{thm}\label{thm_singleton}
		Assume that $L$ admits an approximate fibered Einstein metric, i.e. there is $c \in \real$, such that for any $\epsilon > 0$, there is a relatively positive metric $h^L_{\epsilon}$ on $L$, verifying the following bound
		\begin{equation}
			\big| \wedge_{\omega_B} \omega_H(h^L_{\epsilon}) - c \big| < \epsilon.
		\end{equation}
		Then the vector bundles $E_k := R^0 \pi_* L^k$ are \textit{asymptotically semistable}, i.e. for any quotient sheaves $\mathcal{Q}_k$ of $E_k$, $\rk{\mathcal{Q}_k} > 0$, and any $\epsilon > 0$ for $k$ big enough, we have $\mu(\mathcal{Q}_k) \geq \mu(E_k) - \epsilon k$.
	\end{thm}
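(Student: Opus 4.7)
The plan is to translate the fibered Einstein equation on $(L, h^L)$ into an approximate Hermite--Einstein equation on the direct images $E_k$ equipped with their natural $L^2$-metrics, and then combine this with the classical Kobayashi-type slope bound for approximately Hermite--Einstein bundles. For each $\delta > 0$, I would fix a relatively positive metric $h^L_{\delta}$ on $L$ with $|\wedge_{\omega_B} \omega_H(h^L_{\delta}) - c| < \delta$, equip the fibers with the volume form $\omega(h^L_{\delta})^n / n!$, and endow $E_k := R^0 \pi_* L^k$ with the resulting $L^2$-metric $h^{E_k}_{\delta}$. For $k$ large, $E_k$ is locally free by relative Kodaira vanishing.

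The key technical input is the Ma--Zhang asymptotic expansion, Theorem \ref{thm_mazh}, applied to the curvature of $(E_k, h^{E_k}_{\delta})$. To leading order in $k$, the $\omega_B$-mean curvature equals $k \cdot \wedge_{\omega_B} \omega_H(h^L_{\delta}) \cdot \mathrm{Id}_{E_k}$, with a subleading endomorphism bounded in $C^0$ uniformly in $k$ and, crucially, uniformly over the family of metrics under consideration. Combined with the $\delta$-approximate fibered Einstein hypothesis, this gives a pointwise estimate of the form
\begin{equation}
\Bigl\| \wedge_{\omega_B} R^{E_k, h^{E_k}_{\delta}} / (2\pi) - k c \cdot \mathrm{Id}_{E_k} \Bigr\|_{C^0} \leq k \delta + C,
\end{equation}
with $C$ independent of $k$ and $\delta$. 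In other words, $(E_k, h^{E_k}_{\delta})$ is approximately Hermite--Einstein with error $O(k \delta + 1)$.

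Next I would invoke the classical observation of Kobayashi: if a Hermitian vector bundle $E$ on $(B, \omega_B)$ has $\omega_B$-mean curvature pointwise within $\eta$ of a constant multiple of the identity, then every torsion-free quotient sheaf $\mathcal{Q}$ of positive rank satisfies $\mu(\mathcal{Q}) \geq \mu(E) - C' \eta$, with $C'$ depending only on $B$, $\omega_B$ and $\rk{E}$. Applied to $E_k$ this yields $\mu(\mathcal{Q}_k) \geq \mu(E_k) - C'(k\delta + 1)$ for every quotient $\mathcal{Q}_k$. A Grothendieck--Riemann--Roch computation on $\pi$ gives $\mu(E_k) = c k + O(1)$, so $\mu(E_k)$ grows linearly in $k$, and dividing yields
\begin{equation}
\frac{\mu(\mathcal{Q}_k)}{\mu(E_k)} \geq 1 - O(\delta) - O(1/k).
\end{equation}
Given $\epsilon > 0$, first choose $\delta$ small enough to absorb the first error term, then take $k$ large enough for the second, obtaining the required asymptotic semistability.

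The main obstacle will be ensuring the $\delta$-uniformity of the Ma--Zhang expansion: the remainder in the fiberwise Bergman kernel asymptotics must be controlled by constants depending only on data that remains bounded as $\delta \to 0$. If such uniformity is not directly available, a diagonal argument should suffice, since for each fixed $\delta > 0$ the metric $h^L_{\delta}$ is smooth and the standard asymptotics apply; one then picks $k = k(\delta)$ large enough for the subleading remainder to be dominated by $\delta \cdot \mu(E_k)$. The other two ingredients are genuinely classical: the Kobayashi slope bound reduces to the non-positivity of the second fundamental form contribution to the mean curvature of a quotient, and the asymptotic formula for $\mu(E_k)$ follows from GRR together with the normalization of $c$ justified in Section \ref{sect_num_obs}.
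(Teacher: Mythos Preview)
Your argument is correct and shares with the paper the essential bridge: the Ma--Zhang expansion (Theorem~\ref{thm_mazh}) turning an approximate fibered Einstein metric on $L$ into an approximate Hermite--Einstein metric on each $(E_k, h^{E_k})$. From that point the two proofs diverge. The paper does not argue directly on quotients of a fixed $E_k$; instead it passes through Theorem~\ref{thm_obs}, whose proof combines Ma--Zhang with the full Hermitian Yang--Mills lower bounds of Atiyah--Bott, Daskalopoulos--Wentworth and Li--Zhang--Zhang (Theorem~\ref{thm_ym}) together with the convergence of the Harder--Narasimhan measures $\eta_k^{HN}$ (Theorem~\ref{thm_conv_meas}). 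The bound (\ref{eq_ess_sup_inf1}) then forces $\eta_{\min}^{HN}=\eta_{\max}^{HN}=c$, and Proposition~\ref{prop_asymp_semist} converts this into asymptotic semistability. Your route is more elementary and self-contained for this particular statement: you only need the $p=\infty$ case of Theorem~\ref{thm_ym}, i.e.\ the classical ``curvature increases in quotients'' inequality, applied to each $E_k$ separately, together with the asymptotic $\mu(E_k)\sim ck$ from Section~\ref{sect_num_obs}. You avoid entirely the weak convergence of $\eta_k^{HN}$. The paper's detour, on the other hand, produces the much stronger Theorem~\ref{thm_obs} along the way; note also that your quotient bound is exactly the mechanism behind Proposition~\ref{prop_bound_slope}, so the two arguments are closely related in spirit.

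One small point to tighten: you allow the Kobayashi constant $C'$ to depend on $\rk{E}$. Since $\rk{E_k}=N_k\to\infty$, any genuine rank dependence would destroy the estimate. In fact the correct bound has $C'=\int_B[\omega_B^m]$, independent of the rank (this is what comes out of integrating the pointwise inequality $\tfrac{\imun}{2\pi}\wedge_{\omega_B}R^{\mathcal{Q}}\geq (kc-k\delta-C_\delta)\,\mathrm{Id}_{\mathcal{Q}}$ over the locally free locus, as in the proof of Proposition~\ref{prop_bound_slope}), so your computation goes through once this is stated correctly. Your diagonal handling of the possible $\delta$-dependence of the Ma--Zhang constant is fine.
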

	\end{sloppypar}
	\begin{rem}
		a) It is likely that there is an even closer relationship between fibered Einstein metrics on $L$ and Hermite Einstein metrics on $E_k$, paralleling the known correspondence between constant scalar curvature and balanced metrics, see Donaldson \cite{DonaldScalBalan}.
		\par 
		b) We conjecture that the converse of Theorem \ref{thm_singleton} also holds. In fact, this will follow as a special case of a more general conjecture, which we discuss after Theorem \ref{thm_obs}.
	\end{rem}
	When Theorem \ref{thm_singleton} is applied to $\pi : \mathbb{P}(E^*) \to B$ for some vector bundle $E$ over $B$, and $L := \mathscr{O}(1)$, due to a precise relation between the slopes of $\sym^k E = R^0 \pi_* L^k$, $k \in \nat$, and $E$, see \cite[\S 3.2]{ChenVolume}, we recover the well-known fact, cf. \cite[Theorem 6.10.13]{KobaVB}, that if $E$ admits approximate Hermite-Einstein metrics, then $E$ is semistable.
	\par 
	The asymptotic semistability condition from Theorem \ref{thm_singleton} seems rather difficult to verify at first sight. 
	We will now discuss some numerical obstructions for it. 
	More precisely, for an irreducible complex analytic subspace $Y \subset X$ of dimension $k + m$, $k \geq 0$, such that the restriction of $\pi$ to $Y$, $\pi|_Y : Y \to B$, is a surjection, we define the $\omega_B$-slope, $\mu(Y)$, as
	\begin{equation}\label{eq_defn_slope_subm}
		\mu(Y) = \frac{1}{k + 1} \cdot \frac{\int_Y [c_1(L)^{k + 1}] \cdot [\omega_B^{m - 1}]}{\int_Y [c_1(L)^k] \cdot [\omega_B^m]}.
	\end{equation}
	By Serre vanishing theorem, for $k \in \nat^*$ big enough, the sheaf $\mathcal{Q}_k := R^0 \pi|_{Y, *} L|_Y^k$ can be realized as a quotient of $E_k$ through the restriction map, see proof of Proposition \ref{prop_mu_min_bnd} for details.
	By the asymptotic version of Riemann-Roch-Grothendieck theorem, see Theorem \ref{thm_rrg_asym}, which we establish in our singular setting, we can calculate the asymptotics of the slopes of $\mathcal{Q}_k$ and $E_k$, as $k \to \infty$.
	By comparing the asymptotics of these slopes, we obtain in Section \ref{sect_num_obs} the following result.
	\begin{thm}\label{thm_nonlinstab}
		If the vector bundles $E_k$ are asymptotically semistable, then $X$ is \textit{numerically semistable}, i.e. for any $Y$ as above, we have
		$
			\mu(Y) \geq \mu(X)
		$.
		Moreover, if $\dim B = 1$, then asymptotic semistability of $E_k$ is equivalent to numerical semistability of $X$.
	\end{thm}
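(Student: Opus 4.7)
The plan is to exploit the correspondence, already hinted at just before the statement, between irreducible analytic subspaces $Y \subset X$ dominating $B$ and quotient sheaves of $E_j = R^0\pi_* L^j$ for $j$ large. Given $Y$ of the type in the statement, I would apply $\pi_*$ to the short exact sequence
\begin{equation*}
0 \to \mathcal{I}_Y \otimes L^j \to L^j \to L^j|_Y \to 0,
\end{equation*}
and deduce from relative Serre vanishing (using that $L$ is relatively ample) that for $j$ sufficiently large the restriction map $E_j \twoheadrightarrow \mathcal{Q}_j := R^0 \pi|_{Y,*} L|_Y^j$ is surjective. Hence $\mathcal{Q}_j$ is a legitimate quotient to which the asymptotic semistability hypothesis applies.

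Next I would invoke the asymptotic Riemann-Roch-Grothendieck expansion from Theorem \ref{thm_rrg_asym} twice, once for $\pi : X \to B$ with $L$ and once for $\pi|_Y : Y \to B$ with $L|_Y$, to obtain
\begin{equation*}
\mu(E_j) = j \cdot \mu(X) + O(1), \qquad \mu(\mathcal{Q}_j) = j \cdot \mu(Y) + O(1),
\end{equation*}
as $j \to \infty$, with $\mu(X)$, $\mu(Y)$ given by (\ref{eq_defn_slope_subm}). Substituting the asymptotic semistability bound $\mu(\mathcal{Q}_j) \geq (1 - \epsilon_j)\mu(E_j)$ with $\epsilon_j \to 0$ into these expansions, dividing by $j$ and letting $j \to \infty$ then yields $\mu(Y) \geq \mu(X)$, proving the first half.

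For the converse when $\dim B = 1$, I would need to pass from an arbitrary quotient $E_k \twoheadrightarrow \mathcal{Q}_k$ back to a subvariety of $X$. Writing $\mathcal{K}_k \subset E_k$ for the kernel, I would let $Y_k \subset X$ be the scheme-theoretic common vanishing locus of the sections in $\mathcal{K}_k$, i.e.\ the support of the cokernel of the evaluation map $\pi^* \mathcal{K}_k \to L^k$. By construction, the surjection $E_k \twoheadrightarrow \mathcal{Q}_k$ factors, up to sheaves whose support on $B$ is finite, through $E_k \to \pi_*(L^k|_{Y_k})$. Because $\dim B = 1$, slopes are insensitive to torsion supported on points, so $\mu(\mathcal{Q}_k)$ and $\mu(\pi_*(L^k|_{Y_k}))$ agree up to lower order. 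Decomposing $Y_k$ into irreducible components dominating $B$, applying the numerical semistability assumption to each, and invoking once more the asymptotic Riemann-Roch-Grothendieck expansion would then give $\mu(\mathcal{Q}_k) \geq (1 - o(1)) \mu(E_k)$ as $k \to \infty$, which is precisely asymptotic semistability.

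The main obstacle is the control required in the converse direction: one must verify cleanly that $\mathcal{Q}_k$ compares to $\pi_*(L^k|_{Y_k})$ up to a contribution that is negligible in the leading asymptotic slope, and simultaneously handle possible embedded points, non-reduced structure, and components of $Y_k$ contained in fibers of $\pi$, so that the asymptotic Riemann-Roch-Grothendieck theorem applies in the singular setting. The hypothesis $\dim B = 1$ is exactly what keeps this manageable, since torsion issues collapse and everything eventually reduces to degree computations on a curve.
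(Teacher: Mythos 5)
Your argument for the forward implication ($E_k$ asymptotically semistable $\Rightarrow$ $X$ numerically semistable) is sound and is essentially the paper's proof: you test asymptotic semistability against the quotients $\mathcal{Q}_j = R^0\pi|_{Y,*} L|_Y^j$ obtained by restriction, and invoke Theorem \ref{thm_rrg_asym} through what amounts to Lemma \ref{lem_asympt_slope_calc} (note the asymptotic is $\mu(\mathcal{Q}_j) = j\,\mu(Y)\int_B[\omega_B^m] + o(j)$ rather than $+O(1)$, but the argument is unaffected). The paper phrases this via $\eta_{\min}^{HN}$ and Proposition \ref{prop_mu_min_bnd}; your direct substitution is an equivalent packaging.

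The converse direction (for $\dim B = 1$) has a genuine gap. The paper's proof of this direction does not construct a subvariety from each quotient $\mathcal{Q}_k$; instead it cites as a black box the nontrivial result of Xu--Zhuang (reformulated in \cite[(1.5)]{FinHNI}) that $\eta_{\min}^{HN} = \inf_{C \subset X}\mu(C)\cdot\int_B[\omega_B^m]$, the infimum over irreducible curves $C$ dominating $B$. This identity is exactly the hard step, and your sketch in effect attempts to re-prove it, but the attempt breaks down in several places. First, the factorization is in the wrong direction: since every section in $\mathcal{K}_k = \ker(E_k \to \mathcal{Q}_k)$ vanishes on $Y_k$, the restriction map $E_k \to \pi_*(L^k|_{Y_k})$ kills $\mathcal{K}_k$ and therefore factors \emph{through} $\mathcal{Q}_k$, not the other way around; the induced map $\mathcal{Q}_k \to \pi_*(L^k|_{Y_k})$ has kernel $\pi_*(\mathcal{I}_{Y_k}\otimes L^k)/\mathcal{K}_k$, which need not be torsion and can have large rank (fiberwise, $\mathcal{K}_{k,b}$ need not coincide with the full space of sections of $L_b^k$ vanishing on $Y_{k,b}$). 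So the claimed comparison of slopes $\mu(\mathcal{Q}_k)\approx \mu(\pi_*(L^k|_{Y_k}))$ up to lower order is not justified. Second, even granting it, the RRG asymptotic $\mu(\pi_*(L^l|_{Y}))/l \to \mu(Y)\cdot\int_B[\omega_B^m]$ from Theorem \ref{thm_rrg_asym} is for a \emph{fixed} $Y$ as $l \to \infty$; the error term depends on $Y$ (via its degree, singularities, etc.). Since your $Y_k$ varies with $k$ and may have unbounded geometry, you cannot evaluate the expansion at $l = k$ with $Y = Y_k$ and obtain a uniform $o(k)$ error, which is what the bound $\mu(\mathcal{Q}_k)\geq (1-o(1))\mu(E_k)$ requires. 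Finally, the decomposition of $Y_k$ into components dominating $B$ and discarding the rest, together with handling embedded and non-reduced structure, would need its own uniform estimates. None of these steps is routine, which is why the paper takes them off the shelf from the cited geometric characterization of $\eta_{\min}^{HN}$ rather than deriving them from scratch.
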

	\begin{rem}
		A combination of Theorems \ref{thm_singleton} and \ref{thm_nonlinstab} shows that existence of  approximate fibered Einstein metrics on $L$ implies $\mu(Y) \geq \mu(X)$ for $Y$ above.
		Feng-Liu-Wan \cite[Theorem 2.2]{FengLiuWang}, cf. also Wan-Wang \cite{WanWangRem}, established this by different means under an assumption, requiring among others that the projection of the singular locus of $Y$ to $B$ has codimension at least 2.
		\begin{comment}
		b)
		An attentive reader will notice two subtleties in the calculation of the asymptotics of the slopes of $\mathcal{Q}_k$ and $E_k$, as $k \to \infty$. 
		First, as $\omega_B$ is only Gauduchon, we need to use Riemann-Roch-Grothendieck type theorems in Bott-Chern cohomology. 
		Second, the target manifold in the Riemann-Roch-Grothendieck theorem for $\mathcal{Q}_k$ is not smooth in general, and so these Riemann-Roch-Grothendieck type theorems cannot be applied directly, see Section ... for details.

		These technical difficulties are resolved by using the recent result of Bismut-Shen-Wei \cite{BismutShuWei} and the result of Hironaka \cite[Corollary 2.26]{HironakaI} about the vanishing of higher direct images of birational maps between smooth manifolds.
		\end{comment}
	\end{rem}
	\par 
	As we explain later, Theorem \ref{thm_singleton} is a direct consequence of a more refined result concerning lower bounds on fibered Yang-Mills functionals.
	More precisely, for a relatively Kähler $(1, 1)$-form $\omega$ on $X$ and any $c \in \real$, $p \in [1, +\infty[$, we define the fibered Yang-Mills functional as
	\begin{equation}
		\begin{aligned}
		&
		{\rm{FYM}}_{p, c}(\pi, \omega)
		:=
		\int_X \big| \wedge_{\omega_B} \omega_H(x) - c  \big|^p \omega^n \wedge \pi^* \omega_B^m (x),
		\\
		&
		{\rm{FYM}}_{+ \infty, c}(\pi, \omega)
		:=
		\sup_{x \in X} \big| \wedge_{\omega_B} \omega_H(x) - c  \big|.
		\end{aligned}
	\end{equation}
	We also denote ${\rm{FYM}}_{p, c}(\pi, h^L) := {\rm{FYM}}_{p, c}(\pi, c_1(L, h^L))$ for a relatively positive metric $h^L$ on $L$.
	We will now show that asymptotic Harder-Narasimhan slopes of $E_k$, as $k \to \infty$, yield lower bounds for these functionals.
	To readers familiar with Hermitian Yang-Mills theory, see Atiyah-Bott \cite[Proposition 8.20]{AtiyahBott}, Donaldson \cite[Proposition 5]{DonaldASD} and Daskalopoulos-Wentworth \cite[\S 2.3, 2.4]{DaskWent}, this will sound very natural.
	Indeed, again from the work of Ma-Zhang \cite{MaZhangSuperconnBKPubl}, one can view the horizontal mean curvature of $L$ as the semi-classical limit (i.e. $k \to \infty$) of the mean curvature of $E_k$. 
	From this, we establish the lower bounds on the fibered Yang-Mills functionals through the limits of the lower bounds on the Hermitian Yang-Mills functionals of $E_k$. 
	\par 
	To explain this in details, recall first that any torsion-free coherent sheaf $\mathscr{E}$ on $(B, [\omega_B])$ admits a unique filtration by subsheaves $\mathscr{E}_i$, $i = 1, \ldots, s$, also called \textit{Harder-Narasimhan filtration}:
	\begin{equation}\label{eq_HN_filt}
		\mathscr{E} = \mathscr{E}_s \supset \mathscr{E}_{s - 1} \supset \cdots \supset \mathscr{E}_1 \supset \mathscr{E}_0 = \{0\},
	\end{equation}
	such that for any $1 \leq i \leq s - 1$, the quotient sheaf $\mathscr{E}_i / \mathscr{E}_{i - 1}$ is the maximal semistable (torsion-free coherent) subsheaf of $\mathscr{E} / \mathscr{E}_{i - 1}$, i.e. for any subsheaf of $\mathcal{F}$ of a (torsion-free coherent) sheaf $\mathscr{E} / \mathscr{E}_{i - 1}$, we have $\mu(\mathcal{F}) \leq \mu(\mathscr{E}_i / \mathscr{E}_{i - 1})$ and $\rk{\mathcal{F}} \leq \rk{\mathscr{E}_i / \mathscr{E}_{i - 1}}$ if $\mu(\mathcal{F}) = \mu(\mathscr{E}_i / \mathscr{E}_{i - 1})$.
	For the proof of this result in the setting of Gauduchon form $\omega_B$, see either Bruasse \cite{BruasseHN} or Greb-Kebekus-Peternell \cite[Corollary 2.27]{GrebKebPetMov}.
	We define the \textit{Harder-Narasimhan slopes}, $\mu_1, \ldots, \mu_{\rk{\mathscr{E}}}$ of $\mathscr{E}$, so that $\mu(\mathscr{E}_i / \mathscr{E}_{i - 1})$ appears among $\mu_1, \ldots, \mu_{\rk{\mathscr{E}}}$ exactly $\rk{\mathscr{E}_i / \mathscr{E}_{i - 1}}$ times, and the sequence $\mu_1, \ldots, \mu_{\rk{\mathscr{E}}}$ is non-increasing.
	We let $\mu_{\min} := \mu_{\rk{\mathscr{E}}}$, $\mu_{\max} := \mu_{1}$. 
	\par 
	We denote $N_k := \rk{E_k}$, and let $\mu_1^k, \ldots, \mu_{N_k}^k$ be the Harder-Narasimhan slopes of $E_k$, and $\mu_{\min}^k, \mu_{\max}^k$ be the minimal and the maximal slopes.
	Define the probability measure $\eta_k^{HN}$ on $\real$ as
	\begin{equation}\label{eq_eta_defn}
		\eta_k^{HN} := \frac{1}{N_k} \sum_{i = 1}^{N_k} \delta \Big[ \frac{\mu_i^k}{k} \Big],
	\end{equation}
	where $\delta[x]$ is the Dirac mass at $x \in \real$. Our lower bounds for the fibered Yang-Mills functionals will build upon the following result. 
	\begin{thm}\label{thm_conv_meas}
		The sequence of measures $\eta_k^{HN}$ converges weakly, as $k \to \infty$, to a probability measure $\eta^{HN}$ on $\real$, and the limits below exist and relate with $\eta^{HN}$ as follows 
		\begin{equation}\label{eq_conv_meas}
			\eta_{\min}^{HN} := \lim_{k \to \infty} \frac{\mu_{\min}^k}{k} \leq \essinf \eta^{HN},
			\qquad
			\eta_{\max}^{HN} := \lim_{k \to \infty} \frac{\mu_{\max}^k}{k} = \esssup \eta^{HN}.
		\end{equation}
	\end{thm}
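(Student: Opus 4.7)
The plan is to interpret the Harder-Narasimhan filtrations of the family $\{E_k\}_{k \geq 1}$ as a filtered graded linear series attached to the section ring $\bigoplus_k H^0(B, E_k) = \bigoplus_k H^0(X, L^k)$ (the identification holds for $k$ large by Serre vanishing and relative ampleness), and then to invoke a convergence theorem of Boucksom--Chen / Chen type for such filtered graded algebras. Concretely, for each $t \in \real$ let $E_k^{\geq t} \subset E_k$ denote the sum of the HN pieces of slope at least $t$; then one checks that, up to a countable set of exceptional values of $t$, one has $\eta_k^{HN}((t, +\infty)) = \rk{E_k^{\geq tk}}/N_k$, so that weak convergence of $\eta_k^{HN}$ is equivalent to pointwise convergence of these normalized ranks.

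The crucial algebraic input is an \textit{almost-multiplicativity} of the HN filtration under the natural map $m_{k,l} : E_k \otimes E_l \to E_{k+l}$. For $k, l$ large the map $m_{k,l}$ is surjective by Serre vanishing, and, using the fact that the tensor product of two $\omega_B$-semistable torsion-free coherent sheaves is $\omega_B$-semistable in characteristic zero (proved in the Kähler case by Mehta--Ramanathan and Maruyama, and extended to the Gauduchon setting via a direct Hermite-Einstein argument by Li--Yau and Bruasse), one deduces that $m_{k,l}$ maps $E_k^{\geq tk} \otimes E_l^{\geq sl}$ into $E_{k+l}^{\geq tk + sl}$. From this the sequence $k \mapsto \mu_{\max}^k$ is (almost) superadditive and $k \mapsto -\mu_{\min}^k$ is (almost) subadditive, so Fekete's lemma gives the existence of both limits $\eta_{\min}^{HN}$ and $\eta_{\max}^{HN}$ in (\ref{eq_conv_meas}).

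With the multiplicativity above, the datum $\{E_k^{\geq tk}\}_{k, t}$ is a filtered graded linear series in the sense of Boucksom--Chen, and applying their concave-transform theorem (using the asymptotic Riemann-Roch-Grothendieck theorem of the paper, Theorem \ref{thm_rrg_asym}, to evaluate the growth of $\rk{E_k^{\geq tk}}$ through Bott-Chern intersections on the Okounkov fibration) yields the weak convergence $\eta_k^{HN} \to \eta^{HN}$ to a compactly supported probability measure. The identification $\eta_{\max}^{HN} = \esssup \eta^{HN}$ then follows because both quantities equal the supremum of $t$ for which $\rk{E_k^{\geq tk}}/N_k$ has positive limit; the inequality $\eta_{\min}^{HN} \leq \essinf \eta^{HN}$ is seen by applying the same argument to the top of the HN filtration of the dual (or to quotients), and can be strict because the bottom HN piece of $E_k$ may have rank $o(N_k)$, in which case its slope contributes to $\mu_{\min}^k$ but is invisible to the limit measure $\eta^{HN}$.

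The main obstacle is the tensor-product-of-semistable statement in the Gauduchon setting on which the almost-multiplicativity rests: in the strictly non-Kähler Gauduchon regime the classical Mehta--Ramanathan restriction argument is not directly available, and one must route through the Kobayashi--Hitchin correspondence of Li--Yau to produce Hermite-Einstein metrics whose tensor products are again Hermite-Einstein, then deduce semistability of the tensor product. Once this ingredient is in hand, the rest of the proof is a rather mechanical adaptation of the Boucksom--Chen framework to the Bott-Chern/Aeppli intersection theory used throughout the paper.
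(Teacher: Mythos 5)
Your overall plan — recast the Harder–Narasimhan data as a filtered graded object, establish some form of (sub)multiplicativity, and invoke the Boucksom–Chen formalism to produce the limit measure — is the same skeleton the paper uses. But there are several points where your proposal diverges from the paper's proof or leaves real gaps.

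First, the most important omission: you never establish the \emph{linear upper bound} $\mu_{\max}^k \leq Ck$. Fekete's lemma for the (almost) superadditive sequence $\mu_{\max}^k$ only gives existence of $\lim \mu_{\max}^k/k$ in $[-\infty,+\infty]$; without an a priori upper bound linear in $k$ the limit could be $+\infty$, and the Boucksom--Chen formalism requires the filtration to be \emph{bounded} in exactly this sense. The paper flags this bound as the crucial new ingredient in the Gauduchon setting — it is proved by a differential-geometric argument (Proposition~\ref{prop_bound_slope}) using Ma--Zhang's Theorem~\ref{thm_mazh}, precisely because the algebraic proofs from the projective case do not transfer. Your proposal does not address this at all.

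Second, you base the submultiplicativity on a tensor-product-of-semistables theorem for torsion-free coherent sheaves over a Gauduchon base, which is then fed directly into the sheaf-level maps $m_{k,l} : E_k \otimes E_l \to E_{k+l}$. This is not the route the paper takes. The paper instead passes to the \emph{induced filtration on the section ring of a generic fibre} $R(X_b, L_b) = \oplus_k H^0(X_b, L_b^k)$, and proves submultiplicativity of that induced filtration (citing \cite[Proposition 2.5]{FinHNI}, and observing that the argument there never uses projectivity of $B$). This restriction to a fibre is not just a stylistic choice: the Boucksom--Chen concave-transform theorem is a statement about filtered \emph{graded linear series}, i.e.\ filtrations of vector spaces compatible with the graded algebra structure, and a filtration of the sheaves $E_k$ by subsheaves is not directly such an object. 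Your proposal jumps from the sheaf-level multiplicativity to "the datum $\{E_k^{\geq tk}\}$ is a filtered graded linear series," which elides precisely the reduction the paper carries out. You yourself flag the tensor-product theorem in the strictly Gauduchon setting as the main obstacle; the paper's fibre-wise route is what allows it to avoid committing to that statement at the sheaf level on $B$.

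Third, Theorem~\ref{thm_rrg_asym} and the "Okounkov fibration" play no role in the paper's proof of this statement; the asymptotic Riemann--Roch--Grothendieck theorem is used in Section~\ref{sect_num_obs} for Theorem~\ref{thm_nonlinstab}, not here. Your identification $\eta_{\max}^{HN} = \esssup \eta^{HN}$ and the explanation of why $\eta_{\min}^{HN} \leq \essinf \eta^{HN}$ can be strict (a bottom HN piece of rank $o(N_k)$) are both in line with the paper. In sum: the missing linear bound on $\mu_{\max}^k$ is a genuine gap, and the submultiplicativity step as you formulated it rests on an unproved (in this generality) tensor-product theorem and skips the reduction to a filtered graded linear series on a generic fibre, both of which the paper handles differently.
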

	\begin{rem}
		The proof of Theorem \ref{thm_conv_meas} follows the arguments from Chen \cite{ChenHNolyg} and \cite[Theorem 1.1]{FinHNI}, establishing Theorem \ref{thm_conv_meas} in the projective setting for flat maps $\pi: X \to B$, for $\dim B = 1$ and $\dim B \geq 1$ respectively. 
		The only difference is that due to the lack of algebraicity, the proofs from \cite{ChenHNolyg} and \cite{FinHNI} of the linear upper bound on $\mu_{\max}^k$ in $k \in \nat^*$, crucial for Theorem \ref{thm_conv_meas}, do not work. 
		Here this bound is obtained by a differential-geometric argument, see Proposition \ref{prop_bound_slope}.
	\end{rem}
	We are finally ready to state our lower bounds for the fibered Yang-Mills functionals.
	\begin{thm}\label{thm_obs}
		For any relatively positive metric $h^L$ on $L$, we have
		\begin{equation}\label{eq_ess_sup_inf1}
			\inf_{x \in X} \wedge_{\omega_B} \omega_H(x) \leq \eta_{\min}^{HN}, 
			\qquad 
			\sup_{x \in X} \wedge_{\omega_B} \omega_H(x) \geq \eta_{\max}^{HN},
		\end{equation}
		If, moreover, $\omega_B$ is Kähler, then for any $c \in \real$, $p \in [1, +\infty[$, we have
		\begin{equation}\label{eq_obs}
			{\rm{FYM}}_{p, c}(\pi, h^L)
			\geq
			\int_{\real} \big|x - c \big|^p d \eta^{HN}(x)
			\cdot
			\int_X [\omega^n] \cdot \pi^*[\omega_B^m].
		\end{equation}
	\end{thm}
	\begin{rem}\label{rem_thm_obs}
		a) As we shall establish in Proposition \ref{prop_asymp_semist}, $\eta_{\min}^{HN} = \eta_{\max}^{HN}$ if and only if $E_k$, $k \in \nat$, are asymptotically semistable.
		Hence, (\ref{eq_ess_sup_inf1}) refines Theorem \ref{thm_singleton}.
		\par 
		b) Remark that the left hand-side of (\ref{eq_obs}) depends on $h^L$, but the right-hand side doesn't.
		\par 
		c) When $X := \mathbb{P}(E^*)$ for some holomorphic vector bundle $E$ over $B$, $L := \mathscr{O}(1)$, and $h^L$ is induced by a Hermitian metric $h^E$ on $E$, the result can be deduced from the lower bounds on the Hermitian Yang-Mills functionals due to Atiyah-Bott \cite{AtiyahBott} and Daskalopoulos-Wentworth \cite{DaskWent}.
	\end{rem}
	\par 
	Remark that similar lower bounds in the context of constant scalar curvature metrics were obtained by Donaldson \cite{DonLowCalabi} for Calabi functional.
	Here, as in \cite{DonLowCalabi}, we expect the bounds from Theorem \ref{thm_obs} to be tight. In other words, it seems likely that the following conjecture holds.
	\vspace*{0.3cm}
	\par 
	\noindent \textbf{Conjecture.}
		\textit{In the notations of Theorem \ref{thm_obs}, for any $p \in [1, +\infty[$, $c \in \real$, we have}
		\begin{equation}\label{eq_obsid}
		\begin{aligned}
			&
			\inf_{h^L}
			{\rm{FYM}}_{p, c}(\pi, h^L)
			=
			\int_{\real} \big|x - c \big|^p d \eta^{HN}(x)
			\cdot
			\int_X [\omega^n] \cdot \pi^*[\omega_B^m],
			\\
			&
			\inf_{h^L}
			{\rm{FYM}}_{+ \infty, c}(\pi, h^L)
			=
			\max \Big\{ | \eta_{\min}^{HN} - c |, | \eta_{\max}^{HN} - c | \Big\},
		\end{aligned}
		\end{equation}
		\textit{where the infimum is taken among all relatively positive metrics $h^L$ on $L$.}
	\par 
	\vspace*{0.3cm}
	\begin{rem}
		In the recent paper \cite{FinHYM}, author established the above conjecture for $p = 1$.
	\end{rem}
	\par 
	By Remark \ref{rem_thm_obs}.a), if the above conjecture holds for $p = + \infty$, then the converse implication of Theorem \ref{thm_singleton} also follows, upon taking $c := \eta_{\min}^{HN} = \eta_{\max}^{HN}$.
	\par 
	For $\pi : \mathbb{P}(E^*) \to B$, $L = \mathscr{O}(1)$, where $E$ is some holomorphic vector bundle over a complex compact manifold $B$, one can show that Conjecture holds by the existence of the approximate critical hermitian structures on vector bundles, cf. Daskalopoulos-Wentworth \cite[Definition 3.9]{DaskWent}, and the calculation of the asymptotic slopes of $\sym^k E = R^0 \pi_* L^k$, due to Chen \cite[Theorem 1.2]{ChenVolume}.
	The following result is another partial justification of the Conjecture.
	\begin{thm}\label{thm_trshld}
		The following identity holds
		$
			\sup_{h^L} \inf_{x \in X} \wedge_{\omega_B} \omega_H(x) = \eta_{\min}^{HN},
		$
		where the supremum is taken among all relatively positive metrics $h^L$ on $L$.
	\end{thm}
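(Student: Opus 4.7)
The upper bound in the theorem is an immediate consequence of the second inequality in (\ref{eq_ess_sup_inf1}), which is the $p = +\infty$ case of Theorem \ref{thm_obs}. The substance is therefore the reverse inequality
$$
\sup_{h^L} \inf_{x \in X} \wedge_{\omega_B} \omega_H(h^L)(x) \geq \eta_{\min}^{HN}.
$$
The plan is to follow the scheme the author alludes to in the projectivization case $\pi : \mathbb{P}(E^*) \to B$ (where the full Conjecture is known), using the semiclassical correspondence between metrics on $L$ and on $E_k$ that animates the paper: construct, for each $\delta > 0$ and all large $k$, a Hermitian metric on $E_k$ whose mean curvature has minimum eigenvalue close to $\mu_{\min}^k$, quantize it to a Fubini-Study metric on $L$, and read off the resulting horizontal mean curvature semiclassically.

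\textbf{Step 1 (approximate critical Hermitian structure on $E_k$).} Starting from the Harder-Narasimhan filtration (\ref{eq_HN_filt}) of $E_k$, I would build a Hermitian metric $h^{E_k}$ satisfying, uniformly on $B$,
$$
\lambda_{\min}\!\bigl(\sqrt{-1}\Lambda_{\omega_B} F(E_k, h^{E_k})\bigr) \geq \mu_{\min}^k - \delta k.
$$
This is the \emph{approximate critical Hermitian structure} construction of Daskalopoulos-Wentworth: on each semistable graded quotient $\mathscr{E}_i / \mathscr{E}_{i-1}$, the approximate Kobayashi-Hitchin correspondence (valid in our Gauduchon setting by Bruasse) produces an approximate Hermite-Einstein metric whose mean curvature is close to $\mu(\mathscr{E}_i/\mathscr{E}_{i-1})$ times the identity; these are then assembled via the filtration and the adjacent blocks rescaled to damp second fundamental form contributions, so that eigenvalues of the total mean curvature cluster near the Harder-Narasimhan slopes, and in particular the minimum approaches $\mu_{\min}^k$.

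\textbf{Step 2 (Fubini-Study quantization).} The metric $h^{E_k}$ induces fiberwise inner products on $(E_k)_b = H^0(X_b, L^k|_{X_b})$, and through the relative Kodaira embedding $X \hookrightarrow \mathbb{P}(E_k^*)$, available for $k$ large since $L$ is relatively ample, yields a Fubini-Study metric $h^{L^k}_{\rm FS}$ on $L^k$, hence (after taking the $k$-th root) a relatively positive metric $h^L_k$ on $L$.

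\textbf{Step 3 (semiclassical identification).} Applying the asymptotic expansion underlying Theorem \ref{thm_mazh} in its converse direction, at any $x \in X_b$ one has
$$
k \cdot \wedge_{\omega_B} \omega_H(h^L_k)(x) = \frac{\bigl\langle \sqrt{-1}\Lambda_{\omega_B} F(E_k, h^{E_k}) \cdot v_x,\, v_x\bigr\rangle_{h^{E_k}}}{\|v_x\|_{h^{E_k}}^2} + o(1),
$$
uniformly in $x$, where $v_x \in (E_k)_b$ is the Bergman peak section at $x$. Combined with Step 1 and Theorem \ref{thm_conv_meas}, this gives $\inf_x \wedge_{\omega_B} \omega_H(h^L_k)(x) \geq \eta_{\min}^{HN} - \delta - o(1)$; letting first $k \to \infty$ and then $\delta \to 0$ completes the argument.

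The principal obstacle lies in Step 3: the Ma-Zhang-type expansion was originally set up for $L^2$-metrics induced from a fixed smooth $h^L$, whereas here it must be applied to an arbitrary sequence of fiberwise metrics $h^{E_k}$ whose geometry is controlled only through the approximate critical property from Step 1. Extracting the above pointwise asymptotic uniformly in $x$ along such a sequence, and simultaneously handling the fact that the Harder-Narasimhan subsheaves in Step 1 may fail to be subbundles along a codimension $\geq 2$ analytic subset of $B$ over which the global estimates are nevertheless required, is the delicate technical step on which the argument rests.
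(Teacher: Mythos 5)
Your Steps 1 and 2 agree with the paper: the paper also starts by selecting $k$ with $\mu_{\min}^k/k > \eta_{\min}^{HN} - \epsilon/2$, then invokes Corollary \ref{cor_low_bnd_curv} (quoted from \cite{LiZhaingHN}, which is valid in the Gauduchon setting, so the Bruasse reference you mention is not needed) to produce a Hermitian metric on $E_k$ whose contracted curvature is bounded below by $(\mu_{\min}^k - \epsilon/2)\,{\rm Id}_{E_k}$, and then pulls the induced Fubini-Study metric on $\mathscr{O}(1)$ back to $L$ through the Kodaira embedding $\iota_k : X \hookrightarrow \mathbb{P}(E_k^*)$. The divergence, and the gap, is in your Step 3, and you have correctly identified where the trouble is: Theorem \ref{thm_mazh} is an asymptotic statement about the $L^2$-metric on $E_k$ built from a \emph{fixed} relatively positive $h^L$; running it in "converse direction" on the Fubini-Study metric induced by an arbitrary (and $k$-dependent) sequence $h^{E_k}$ is not something the theorem gives you, and establishing a pointwise peak-section expansion uniform along such a sequence is an open technical problem, not a proved fact. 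As stated, this step does not close.

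The paper bypasses all of this with no asymptotics at all. Two elementary, exact observations replace your Step 3: Lemma \ref{lem_low_bnd} computes the horizontal mean curvature of $(\mathscr{O}(1), h^{\mathscr{O}(1)})$ on $\mathbb{P}(E_k^*)$ \emph{exactly}, via the classical formula \cite[(V.15.15)]{DemCompl}, identifying $\inf_{y \in \mathbb{P}(F_x^*)} \wedge_{\omega_B}\omega_H(y)$ with the smallest eigenvalue of $\tfrac{\imun}{2\pi}\wedge_{\omega_B} R^{E_k}_x$; and Lemma \ref{lem_hor_curv} shows that the horizontal mean curvature of a relatively positive form can only increase upon restriction to a subfamily (a fibered version of "curvature decreases in subbundles," proved by comparing horizontal lifts). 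Together these give the one-sided inequality $\inf_{x \in X}\wedge_{\omega_B}\omega_H(h^L_\epsilon)(x) \geq \eta_{\min}^{HN} - \epsilon$ directly, with no need for a Bergman kernel or peak sections and with no uniformity issues. Note also that this is exactly the strength needed: the theorem only requires a lower bound, whereas your proposed asymptotic equality is stronger than necessary and is precisely what cannot be justified. To repair your argument you should replace Step 3 by these two lemmas.
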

	\begin{rem}
		When $X := \mathbb{P}(E^*)$ for some holomorphic vector bundle $E$ over $B$, $L := \mathscr{O}(1)$, and $h^L$ is induced by a Hermitian metric $h^E$ on $E$, follows from Li-Zhang-Zhang \cite[Theorem 1.5]{LiZhaingHN}.
	\end{rem}
	\par 
	\begin{sloppypar}
	We assume now that $B$ (and, hence, $X$) is projective.
	Recall that a line bundle $L$ on $X$ is called \textit{nef} if for any irreducible curve $C$ in $X$, $\int_{C} c_1(L) \geq 0$.
	It is well-known that this condition is equivalent to the existence of metrics with an arbitrarily small negative part of the curvature, as stated precisely in \cite[Proposition 4.2]{DemSingHermMetr}. 
	One of the central concerns in complex geometry is the study of variations of this result, which provides a “dictionary" between the algebraic and analytic definitions of positivity. 
	Let us now explain an application of Theorem \ref{thm_trshld} in this context.
	\par 
	We fix a very ample integral multipolarization $[\omega_{B, 1}], \ldots, [\omega_{B, m - 1}]$ on $B$, which is a collection of very ample classes from $H^{1, 1}(B, \comp) \cap H^2(B, \integ)$.
	We say that a $\mathbb{Q}$-line bundle $L$ on $X$ is \textit{$([\omega_{B, 1}], \ldots, [\omega_{B, m - 1}])$-generically fibered nef with respect to $\pi$} if there is $l_0 \in \nat^*$, such that for any regular curve $C = C(l) \subset B$, $l = (l_1, \ldots, l_{m - 1}) \in \nat^{*(m - 1)}$, $l_i \geq l_0$, $i = 1, \ldots, m - 1$, given by a complete intersection of \textit{generic} divisors from classes $l_1 [\omega_{B, 1}], \ldots, l_{m - 1} [\omega_{B, m - 1}]$, the restriction of $c_1(L)$ to $\pi^{-1}(C)$ is nef.
	When $\pi$ is the projectivization of a vector bundle, equivalent definition was given by Miyaoka \cite{MiyGenNef}, see also Peternell \cite{PetGenNef}.
	The general case was introduced in \cite{FinHNI}.
	We say $L$ is \textit{stably $([\omega_{B, 1}], \ldots, [\omega_{B, m - 1}])$-generically fibered nef with respect to $\pi$} if for some (or any) ample line bundle $L_0$ on $X$, for any $\epsilon > 0$, $\epsilon \in \mathbb{Q}$, the $\mathbb{Q}$-line bundle $L \otimes L_0^{\epsilon}$ is $([\omega_{B, 1}], \ldots, [\omega_{B, m - 1}])$-generically fibered nef with respect to $\pi$.
	Recall that $L$ is called \textit{relatively nef with respect to $\pi$} if its restriction to every fiber is nef.
	As we explain in Section \ref{sect_hor_curv}, from the previously obtained algebraic description of $\eta_{\min}^{HN}$ from \cite[Corollary 1.4]{FinHNI}, Theorem \ref{thm_trshld} can be used to prove the following result.
	\par
	\begin{thm}\label{thm_generic_nef}
		Consider a holomorphic submersion $\pi : X \to B$ between projective manifolds $B, X$. 
		A relatively nef line bundle $L$ on $X$ is stably $([\omega_{B, 1}], \ldots, [\omega_{B, m - 1}])$-generically fibered nef if and only if for any (or some) Kähler forms $\omega_{B, 1}, \ldots, \omega_{B, m - 1}$ on $B$ in $[\omega_{B, 1}], \ldots, [\omega_{B, m - 1}]$, and any (or some) Kähler form $\omega_{X}$ on $X$, for any $\epsilon > 0$, there is a Hermitian metric $h^L_{\epsilon}$ on $L$, such that
		\begin{equation}\label{eq_generic_nef}
			\omega(h^L_{\epsilon}) \wedge \pi^* \omega_{B, 1} \wedge \cdots \wedge \pi^* \omega_{B, m - 1}  
			\geq
			-\epsilon \cdot \omega_X^m,
		\end{equation}
		where by this we mean that the volume forms obtained by the restriction to every $m$-dimensional complex hyperplane of each side of (\ref{eq_generic_nef}) compares as required in (\ref{eq_generic_nef}), cf. \cite[(III.1.6)]{DemCompl}.
	\end{thm}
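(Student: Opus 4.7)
The plan is to combine Theorem~\ref{thm_trshld} with the algebraic description of $\eta_{\min}^{HN}$ provided by \cite[Corollary 1.4]{FinHNI}, which identifies (modulo the ``stable'' qualifier) the condition $\eta_{\min}^{HN} \geq 0$ with $([\omega_{B,1}], \ldots, [\omega_{B,m-1}])$-generic fibered nefness. The crucial analytic bridge is a pointwise translation of (\ref{eq_generic_nef}): since $\pi^*\omega_{B,i}$ annihilates vertical tangent directions, the product $\omega(h^L_\epsilon) \wedge \pi^*\omega_{B,1} \wedge \cdots \wedge \pi^*\omega_{B,m-1}$ depends effectively only on the horizontal component $\omega_H$. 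Testing positivity of the resulting $(m,m)$-form in Demailly's sense against $m$-dimensional complex subspaces $W \subset T_xX$, only the purely horizontal subspace $W = T^H_xX$ imposes a nontrivial constraint, since subspaces meeting $T^VX$ nontrivially involve the Kähler pulled-back forms together with $\omega_V \geq 0$ and are automatically non-negative. Consequently, (\ref{eq_generic_nef}) is equivalent to a pointwise scalar lower bound on a multi-polarization variant of the horizontal mean curvature, namely $\omega_H \wedge \omega_{B,1} \wedge \cdots \wedge \omega_{B,m-1} \geq -C\epsilon \cdot \omega_X^m|_{T^HX}$ (as top forms on $T^H_xX$) for a constant $C$ depending only on $\omega_X$ and the $\omega_{B,i}$.

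Next I would establish a multi-polarization analog of Theorem~\ref{thm_trshld}. Define the multi-polarization slope $\mu_{\mathrm{multi}}(\mathscr{E}) := \int_B c_1(\det\mathscr{E}) \cdot [\omega_{B,1}] \cdots [\omega_{B,m-1}]/\rk{\mathscr{E}}$, the associated Harder-Narasimhan filtration, and its minimal asymptotic slope $\eta_{\min}^{HN,\mathrm{multi}}$ for the sequence $E_k = R^0\pi_* L^k$. The desired analog then reads
\begin{equation*}
	\sup_{h^L} \inf_{x \in X} \frac{\omega_H(h^L) \wedge \omega_{B,1} \wedge \cdots \wedge \omega_{B,m-1}}{\omega_X^m|_{T^HX}}(x) = \eta_{\min}^{HN,\mathrm{multi}}.
\end{equation*}
This can be obtained either directly by repeating the proof of Theorem~\ref{thm_trshld} with multi-polarization slopes replacing single-polarization ones, or by a polarization trick: letting $\omega_B(t) := \sum_i t_i \omega_{B,i}$, the multi-polarization slope equals the coefficient of $t_1 \cdots t_{m-1}$ in the single-polarization slope with respect to $\omega_B(t)$, and an analogous differentiation identity holds on the analytic side.

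To handle the hypothesis that $L$ is only relatively nef (not necessarily relatively positive), and to accommodate the ``stably'' qualifier, I would use the projectivity of $X$: fix an ample line bundle $L_0$ on $X$, apply the multi-polarization Theorem~\ref{thm_trshld} to $L \otimes L_0^{\epsilon_0}$ (which is relatively positive for $\epsilon_0 > 0$), and let $\epsilon_0 \to 0$. Together with \cite[Corollary 1.4]{FinHNI} relating $\eta_{\min}^{HN,\mathrm{multi}}(L \otimes L_0^{\epsilon_0}) \geq 0$ for all $\epsilon_0 > 0$ to stable $([\omega_{B,1}], \ldots, [\omega_{B,m-1}])$-generic fibered nefness, this yields both directions of the equivalence claimed in the theorem.

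The main obstacle is the multi-polarization extension of Theorem~\ref{thm_trshld}. The Harder-Narasimhan filtration with respect to $\omega_B(t)$ may jump as $t$ varies, so either one verifies that the associated spectral measures $\eta_k^{HN}$ depend on $t$ in a sufficiently controlled (essentially polynomial) way so that the coefficient of $t_1 \cdots t_{m-1}$ captures the multi-polarization $\eta_{\min}^{HN}$, or one revisits the proof of Theorem~\ref{thm_trshld} throughout with multi-polarization slopes, which requires re-examining convergence estimates along the way. The remaining ingredients---the pointwise translation of (\ref{eq_generic_nef}), the stability perturbation by $L_0^{\epsilon_0}$, and the invocation of \cite[Corollary 1.4]{FinHNI}---are relatively routine once the extended Theorem~\ref{thm_trshld} is in hand.
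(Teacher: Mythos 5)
The decisive idea you are missing is Michelsohn's theorem: for Kähler forms $\omega_{B,1}, \ldots, \omega_{B,m-1}$ on $B$, there is a Hermitian form $\omega_B$ with $\omega_B^{m-1} = \omega_{B,1} \wedge \cdots \wedge \omega_{B,m-1}$, and this $\omega_B$ is automatically Gauduchon. Once one has this single $\omega_B$, the ``multi-polarization slope'' you define is \emph{identically} the single-polarization $\omega_B$-slope, and Theorem~\ref{thm_trshld}---already stated and proved in the paper for arbitrary Gauduchon $\omega_B$---applies as is, with no extension needed. This is precisely why the whole paper is written at the level of Gauduchon forms rather than Kähler forms (a point the paper makes explicitly in the remark following the theorem). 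You correctly identify the remaining ingredients: the pointwise positivity translation (horizontal directions alone constrain, vertical ones are handled by relative positivity of $\omega(h^L_\epsilon)$), the perturbation by an ample $L_0^{\delta}$ to pass from relatively nef to relatively ample and to realize the ``stably'' qualifier, and the algebraic characterization from \cite{FinHNI} (the paper uses it in the form of Proposition~\ref{prop_interpr_st_gen_pos}, equating $\eta_{\min}^{HN} \geq 0$ with stable generic fibered nefness).

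However, the thing you flag as ``the main obstacle''---a multi-polarization analog of Theorem~\ref{thm_trshld}---is exactly the step you do not resolve, and neither of the two routes you sketch for it is sound as written. Repeating the whole proof of Theorem~\ref{thm_trshld} with multi-polarization slopes would require redoing Theorem~\ref{thm_conv_meas}, Proposition~\ref{prop_bound_slope}, and the Hermitian Yang-Mills bounds from scratch, all of which are stated via a contraction $\wedge_{\omega_B}$ against a single Hermitian form and would need a multi-polarization reformulation; nothing guarantees the resulting statements hold in the form you need. The ``polarization trick'' of extracting the coefficient of $t_1 \cdots t_{m-1}$ is more clearly broken: HN filtrations (and hence HN slopes, and hence $\eta_{\min}^{HN}$) depend on the polarization in a piecewise manner with jumping loci, and the minimal asymptotic slope is an infimum, not a polynomial, in the polarization parameters; you acknowledge this issue but do not close the gap. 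The correct move is to recognize that the obstacle does not exist: the Michelsohn construction collapses the multi-polarization problem into the Gauduchon single-polarization one that the paper's machinery already handles.
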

	\end{sloppypar}
	\begin{rem}
		Curiously, even though the forms $\omega_{B, 1}, \ldots, \omega_{B, m - 1}$ are Kähler, if these forms are different, then in the proof of Theorem \ref{thm_generic_nef}, we need to apply Theorem \ref{thm_trshld} for a non-Kähler Gauduchon form $\omega_B$, constructed from $\omega_{B, 1}, \ldots, \omega_{B, m - 1}$. 
		This was our main motivation to write this article in the current generality.
		However, Theorem \ref{thm_generic_nef} is new even if the forms are equal.
	\end{rem}
	\par 
	In conclusion, it seems for us that a proof of the Conjecture might rely either on the techniques of geometric flows or continuity method as in Donaldson \cite{DonaldASD} and Uhlenbeck-Yau \cite{UhlYau}.
	In this vein, the recent apriori bounds for Monge-Amp\`ere and Hessian equations established by Guo-Phong-Tong \cite{GuoPhongTong} and Guo-Phong-Tong-Wang \cite{GuoPhongTongWang} will likely play an important role.
	\par 
	Remark that for Hermitian Yang-Mills functional, the analogous conjecture holds due to results of Atiyah-Bott \cite{AtiyahBott}, Daskalopoulos-Wentworth \cite{DaskWent}, Sibley \cite{SibleyYangMills}, Jacob \cite{JacobYangMillsAB} and Li-Zhang-Zhang \cite{LiZhaingHN}, cf. Theorem \ref{thm_ym_tight} for a precise statement.
	We also mention the recent works of Xia \cite{XiaCalabi}, Hisamoto \cite{HisamCalabi} and Dervan-Sz{\'e}kelyhidi \cite{DerSzek}, cf. also Collins-Hisamoto-Takahashi \cite{CollinsHisamotoTaka}, proving versions of the above Conjecture in the context of constant scalar curvature metrics. 
	\par 
	In a different direction, when $B$ is a bounded smooth strongly pseudoconvex domain in $\comp^n$, $c = 0$ and $\omega_B$ is the standard Kähler form on $B \subset \comp^n$,
	Donaldson \cite{DonaldYMBoundVal} and Coifman-Semmes \cite{CoifmanSemmes} established that Dirichlet problem associated with the Hermite-Einstein equation has solutions for any vector bundle over $B$ (in particular for $E_k$, $k \in \nat$). 
	Wu in \cite{WuKRWZW} showed that Dirichlet problem associated with (\ref{eq_FE}) has always weak solutions, and these solutions can be obtained as the semiclassical limit of the solutions of the Hermite-Einstein equations on $E_k$, cf. also \cite{PhongSturm}, \cite{RubinZeld}, \cite{SongZeldToric} for earlier results in this direction.
	In other words, a phenomenon similar to Theorem \ref{thm_obs} is present: there is a relation between the Hermite-Einstein and the Wess-Zumino-Witten equations.
	The major difference between these developments and our paper is that in our boundaryless setting, neither Hermite-Einstein equations nor fibered Einstein equations have solutions in general, and the methods of \cite{DonaldYMBoundVal}, \cite{CoifmanSemmes}, \cite{WuKRWZW} do not apply.
	\par 
	This article will be organized as follows.
	In Section \ref{sect_curv_dir_im}, we will establish Theorems \ref{thm_conv_meas} and \ref{thm_obs}.
	We discuss how horizontal curvature behaves with respect to a restriction to a subfamily in Section \ref{sect_hor_curv}, and using this, we establish Theorems \ref{thm_trshld}, \ref{thm_generic_nef}.
	Finally, in Section \ref{sect_num_obs}, we establish a numerical obstruction for asymptotic semistability of direct images from Theorem \ref{thm_nonlinstab}.
	\par 
	\textbf{Acknowledgement}. 
	I had a privilege to discuss the results of this paper with Sébastien Boucksom, Paul Gauduchon, Duong H. Phong, Lars M. Sektnan, Jacob Sturm and Richard A. Wentworth, whom I thank for their interest.
	I acknowledge the support of CNRS and École Polytechnique as well as the partial support of ANR-23-CE40-0021-01 JCJC project QCM. 
	A part of this paper was written in the Fall of 2023 during a visit in Columbia University.
	I would like to thank the mathematical department of Columbia University, especially Duong H. Phong, for their hospitality and Alliance Program for their support.
	Finally, I would like to express my gratitude to the anonymous referees for their valuable comments.

	\section{Fibered Yang-Mills functionals through the semiclassical limit}\label{sect_curv_dir_im}
	The main goal of this section is to prove Theorems \ref{thm_conv_meas} and \ref{thm_obs}.
	The theory of Toeplitz operators and Hermitian Yang-Mills theory, which we recall below, will be particularly useful for that.
	\par 
	We begin by recalling some facts about Toeplitz operators. 
	Let $Y$ be a complex projective manifold of dimension $n$ with an ample line bundle $L$.
	We fix a positive Hermitian metric $h^L$ on $L$.
	We denote by $\omega$ its first Chern form, $c_1(L, h^L)$.
	For smooth sections $f, f'$ of $L^k$, $k \in \nat$, over $Y$, we define the $L^2$-scalar product using the pointwise scalar product $\scal{\cdot}{\cdot}_{h^L}$ induced by $h^L$ as follows
	\begin{equation}\label{eq_l2_prod}
		\scal{f}{f'}_{L^2(Y)} := \int_Y \scal{f(x)}{f'(x)}_{h^{L^{\otimes k}}} \cdot \omega^n(x).
	\end{equation}
	\par 
	Recall that the \textit{Bergman projector} $B_k$ is given by the orthogonal projection (with respect to the scalar product (\ref{eq_l2_prod})) from the space of $L^2$-sections of $L^k$ to $H^0(Y, L^k)$.
	For any bounded function $f$ on $Y$, we then define the Toeplitz operator, $T_k(f) : H^0(Y, L^k) \to H^0(Y, L^k)$, as follows 
	\begin{equation}
		T_k(f)(s) = B_k(f \cdot s), \qquad s \in H^0(Y, L^k).
	\end{equation}
	\begin{prop}\label{prop_toepl}
		For any bounded function $f : Y \to \real$, the following inequalities hold
		\begin{equation}\label{eq_toepl_inf_sup}
			\inf f \cdot {\rm{Id}}_{H^0(Y, L^k)} \leq T_k(f) \leq \sup f \cdot {\rm{Id}}_{H^0(Y, L^k)},
		\end{equation}
		where by $A \leq B$ we mean that the difference $B - A$ is positive definite.
		Moreover, if $f$ is smooth, then for any continuous function $\phi : \real \to \real$, we have
		\begin{equation}\label{eq_toepl_tr}
			\begin{aligned}
			&\lim_{k \to \infty} \frac{\tr{\phi(T_k(f))}}{\dim H^0(Y, L^k)} = \frac{\int_{x \in Y} \phi(f(x)) \omega^n(x)}{\int_Y [\omega^n]},
			\\ 
			&\lim_{k \to \infty} \big\| \phi(T_k(f)) \big\| = \max \big\{ |\sup \phi(f)|, |\inf \phi(f)| \big\},
			\end{aligned}
		\end{equation}
		where $\| \cdot \|$ is the operators norm.
	\end{prop}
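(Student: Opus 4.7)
My plan combines self-adjointness of the Bergman projector with the algebra property of Toeplitz operators and standard semiclassical peak-section arguments. For the operator inequality (\ref{eq_toepl_inf_sup}), since $B_k$ is the orthogonal $L^2$-projector and fixes every $s \in H^0(Y, L^k)$, I would compute
\[
\scal{T_k(f) s}{s}_{L^2(Y)} = \scal{B_k(f s)}{s}_{L^2(Y)} = \scal{f s}{s}_{L^2(Y)} = \int_Y f(x) |s(x)|^2_{h^L} \, \omega^n(x),
\]
which is sandwiched between $(\inf f) \|s\|^2_{L^2}$ and $(\sup f) \|s\|^2_{L^2}$. This immediately yields (\ref{eq_toepl_inf_sup}) and confines the spectrum of the self-adjoint operator $T_k(f)$ to $[\inf f, \sup f]$, a fact used repeatedly below.

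For the trace asymptotic in (\ref{eq_toepl_tr}), I would first handle monomials $\phi(x) = x^N$. The key ingredient is the algebra property of Toeplitz operators, $T_k(f) T_k(g) = T_k(fg) + O(k^{-1})$ in operator norm for smooth $f, g$, a cornerstone of semiclassical Toeplitz theory (Bordemann-Meinrenken-Schlichenmaier, Ma-Marinescu). Iterating gives $T_k(f)^N = T_k(f^N) + O(k^{-1})$, and, combined with the diagonal Bergman kernel expansion that produces $\tr{T_k(h)}/\dim H^0(Y,L^k) \to \int_Y h \, \omega^n / \int_Y [\omega^n]$ for smooth $h$, this settles monomials, hence polynomial $\phi$ by linearity. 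To pass to continuous $\phi$, I would use Weierstrass approximation on the compact interval $[\inf f, \sup f]$: the eigenvalues of $T_k(f)$ lie there by Part 1, the values of $f$ lie there by definition, and both sides of the trace formula depend continuously on $\phi$ in the sup-norm on this interval, so the formula extends by density.

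For the operator norm identity, the upper bound is a direct consequence of Part 1 and the spectral mapping theorem for self-adjoint operators. For the matching lower bound, I would choose $x_0 \in Y$ realizing $|\phi(f(x_0))| = \max\{|\sup \phi(f)|, |\inf \phi(f)|\}$ and employ Bergman kernel peak sections $s_k \in H^0(Y, L^k)$ concentrated at $x_0$; the algebra property propagates this concentration through polynomials, and then through any continuous $\phi$ by uniform approximation, giving $\scal{\phi(T_k(f)) s_k}{s_k}/\|s_k\|^2_{L^2} \to \phi(f(x_0))$. The principal technical input throughout is the algebra property $T_k(f) T_k(g) = T_k(fg) + O(k^{-1})$; were it to be proved from scratch, this would be the main obstacle, but here it can be cited directly from the Toeplitz literature, after which all three statements follow by the elementary functional-analytic arguments sketched above.
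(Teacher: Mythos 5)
Your proposal is correct. The paper's treatment of (\ref{eq_toepl_inf_sup}) is the same as yours (positivity of $T_k(f)$ for $f \ge 0$, from self-adjointness of the Bergman projector), but for (\ref{eq_toepl_tr}) the paper simply cites Boutet de Monvel--Guillemin's theorem on the weak convergence of spectral measures of Toeplitz operators, mentioning the Bergman-kernel route (Ma--Marinescu et al.) only as an alternative reference. What you have written out is exactly that alternative route in detail: the operator-norm algebra property $T_k(f)T_k(g) = T_k(fg) + O(k^{-1})$ plus the leading term of the diagonal Bergman kernel expansion settle polynomial $\phi$; spectral localization from (\ref{eq_toepl_inf_sup}) together with Weierstrass approximation on $[\inf f, \sup f]$ extends to continuous $\phi$; and peak sections concentrated at a point realizing the extremum supply the matching lower bound for the operator norm. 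In short, you prove the black box that the paper cites, and the ingredients you invoke are precisely those underlying the cited results. Two small points worth keeping straight: the spectral-mapping upper bound $\|\phi(T_k(f))\| \le \max\{|\sup\phi(f)|, |\inf\phi(f)|\}$ identifies $\sup_{[\inf f,\sup f]}|\phi|$ with $\sup_Y |\phi\circ f|$ via connectedness of $Y$; and when passing the operator-norm remainder $O(k^{-1})$ through the trace, one divides by $\dim H^0(Y,L^k) \sim k^n$, so the remainder's contribution to the normalized trace is indeed $O(k^{-1})$ --- your argument uses this implicitly and correctly.
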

	\begin{proof}
		The statement (\ref{eq_toepl_inf_sup}) follows from the trivial fact that if $f$ is a positive function, then the operator $T_k(f)$ is positive-definite.
		The statement (\ref{eq_toepl_tr}) is a restatement of the weak convergence of spectral measures of Toeplitz operators due to Boutet de Monvel-Guillemin \cite[Theorem 13.13]{BoutGuillSpecToepl}.
		For an alternative proof through Bergman kernel expansion, see Ma-Marinescu \cite[Theorem 7.4.1]{MaHol}, Barron-Ma-Marinescu-Pinsonnault \cite[Theorem 3.8]{BarrMa} or \cite[Appendix A]{FinMaVol}.
		See also Ma-Marinescu \cite{MaMarBTKah}, \cite{MaMarSecondTerm} for generalizations and more refined results. 
	\end{proof}
	\par 
	Now, the reason why Toeplitz operators are relevant to this paper is because they appear as the principal term in the asymptotic expansion of the curvature of $L^2$-metrics on direct images of a polarized fibrations.
	More precisely, consider a proper holomorphic submersion $\pi : X \to B$ between complex manifolds $X$ and $B$ of dimensions $n + m$ and $m$ respectively, $n, m \in \nat$.
	Let $L$ be a holomorphic line bundle over $X$, which is relatively ample with respect to $\pi$.
	Endow $L$ with a relatively positive Hermitian metric $h^L$.
	Let $k \in \nat$ be big enough so that 
	\begin{equation}
		E_k := R^0 \pi_* L^k
	\end{equation}
	is locally free.
	The $L^2$-product (\ref{eq_l2_prod}) then defines a smooth Hermitian metric $h^{E_k}$ on $E_k$.
	We denote by $R^{E_k} \in \ccal^{\infty}(B, \wedge^2 T^*B \otimes \enmr{E_k})$ the curvature of its Chern connection.
	\begin{thm}[{ Ma-Zhang \cite[Theorem 0.4]{MaZhangSuperconnBKPubl} }]\label{thm_mazh}
		There are $C > 0$, $k_0 \in \nat$, such that for any $k \geq k_0$,
		\begin{equation}
			\Big\|
				\frac{\imun}{2 \pi}
				R^{E_k}
				-
				k \cdot T_k(\omega_H)
			\Big\|
			\leq
			C,
		\end{equation}
		where $\| \cdot \|$ is the operator norm, and we naturally extended the definition of Toeplitz operators from functions to bounded sections of $\pi^* \wedge^2 T^*B$ as follows: for a decomposition $\omega_H = \sum f_{i j} dz_i d\overline{z}_j$, where $z_1, \ldots, z_n$ are local coordinates on $B$, we let $T_k(\omega_H) := \sum T_k(f_{i j}) dz_i d\overline{z}_j$.
	\end{thm}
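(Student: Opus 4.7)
The plan is to compute $R^{E_k}$ explicitly via Berndtsson's formula for the curvature of the $L^2$-metric on direct images of a positively curved line bundle, and then identify the leading order term in $k$ with $k \cdot T_k(\omega_H)$ by means of the fiberwise Bergman projection, controlling all remaining terms by $O(1)$ in operator norm.

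First, I would set up local computations. Fix a point $b \in B$ and a local holomorphic frame of $TB$ near $b$ with representatives $\partial_\alpha$. Using the horizontal distribution $T^H X$ induced by $\omega$, lift each $\partial_\alpha$ to a smooth vector field $\partial_\alpha^H$ on a neighborhood of the fiber $X_b := \pi^{-1}(b)$. The Chern connection on $E_k$ decomposes as $\partial_\alpha = \partial_\alpha^H + \text{(vertical correction)}$ when acting on local holomorphic frames given by relative holomorphic sections. Berndtsson's formula, in the form developed in the family setting (see Berndtsson, or the form that Ma-Zhang themselves recall), then expresses, for any $s_1, s_2 \in H^0(X_b, L^k|_{X_b})$,
\begin{equation*}
    \Bigl\langle \tfrac{\imun}{2\pi} R^{E_k}(\partial_\alpha, \overline{\partial_\beta}) s_1, s_2 \Bigr\rangle_{L^2(X_b)}
    = k \int_{X_b} \omega_H(\partial_\alpha^H, \overline{\partial_\beta^H}) \langle s_1, s_2\rangle_{h^{L^k}} \, \omega^n + \mathcal{R}_k(s_1, s_2),
\end{equation*}
where the remainder $\mathcal{R}_k$ is built out of fiberwise $\dbar$-harmonic projections of expressions of the form $\partial_\alpha^H \llcorner \overline{\dbar \partial_\beta^H}$ contracted against $s_1, s_2$, together with a $\dbar$-Laplacian resolvent on the fiber.

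Second, I would identify the leading term. By definition of the Toeplitz operator applied to $\omega_H(\partial_\alpha^H, \overline{\partial_\beta^H})$ viewed as a fiberwise function, the leading integral above is exactly $k \langle T_k(\omega_H(\partial_\alpha^H, \overline{\partial_\beta^H})) s_1, s_2\rangle_{L^2(X_b)}$. Assembling over $(\alpha, \beta)$, this yields $k \cdot T_k(\omega_H)$ as the $k$-linear principal part of $\tfrac{\imun}{2\pi} R^{E_k}$.

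Third, and this is where the main work lies, one must bound the remainder $\mathcal{R}_k$ uniformly in $k$. The terms contributing to $\mathcal{R}_k$ involve the inverse of the fiberwise Kodaira Laplacian $\Box_k$ acting on $(0,1)$-forms with values in $L^k$. By Bochner-Kodaira-Nakano and the relative positivity of $L$, $\Box_k \geq c \cdot k$ on $(0,1)$-forms for some $c > 0$ and $k$ large, so its inverse contributes a factor $O(k^{-1})$. Since $\partial_\alpha^H \llcorner \dbar \partial_\beta^H$ is a smooth $(0,1)$-form along the fiber that does \emph{not} depend on $k$, multiplying against a holomorphic section of $L^k$ and applying the resolvent produces a term of the same $L^2$-norm, scaled by $O(k^{-1})$; pairing with another such expression yields $O(1)$ after multiplication by a $k$ that arises from the density of states. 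Using the Bergman kernel asymptotics (uniform over compact subsets of $B$ and hence over $B$ itself by compactness) and the standard spectral-gap estimates for $\Box_k$ — this is where the heavy lifting resides — one obtains the required uniform operator-norm bound.

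The hard part is making the remainder estimate uniform in $b \in B$ and genuinely in operator norm (not merely in Hilbert-Schmidt or trace-class norms, which would give weaker $O(k^{n-1})$-type bounds). This requires a careful off-diagonal Bergman kernel expansion with uniform remainder estimates, exactly the technology developed by Ma-Marinescu and refined by Ma-Zhang in the family setting, and is the reason the statement is non-trivial. With this in hand, the three ingredients combine to give $\|\tfrac{\imun}{2\pi} R^{E_k} - k \cdot T_k(\omega_H)\| \leq C$.
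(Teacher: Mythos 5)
The paper does not contain a proof of this statement: Theorem~\ref{thm_mazh} is quoted verbatim from Ma--Zhang \cite[Theorem~0.4]{MaZhangSuperconnBKPubl} and treated as a black box, so there is no in-paper argument to compare against. Your sketch is nonetheless a genuinely different route from the cited one: Ma--Zhang work through the Bismut superconnection formalism and a full asymptotic expansion of the $L^2$-metric curvature in families (generalizing local index theory techniques), whereas you propose a direct assault via Berndtsson's curvature formula plus the fiberwise spectral gap for the Kodaira Laplacian. Your identification of the principal term as $k\,T_k(\omega_H)$ from the ``first'' integral in Berndtsson's formula is correct, and this is the easy half.

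Two points in the remainder estimate are not secure as written. First, the Berndtsson remainder is a $W^{-1}$-norm of a $(0,1)$-current built from the second fundamental form of the fibration acting on a holomorphic section; after applying the spectral gap $\Box_k\geq ck$ one gets an estimate of order $O(k^{-1})$ in operator norm, which is in fact \emph{better} than $O(1)$, so the ``multiplication by a $k$ that arises from the density of states'' that you invoke to land at $O(1)$ is spurious and signals a bookkeeping error rather than a real ingredient. Second, and more importantly, the $O(1)$ discrepancy in the theorem does not come from the Berndtsson remainder at all: it comes from subleading contributions to the curvature density (curvature of $T^VX$, the relative canonical class, the mean curvature of the fibers) which are $O(1)$ relative to $k\,\omega_H$ and which your proposed decomposition never isolates. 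To close the argument along your route one would have to expand Berndtsson's formula one order further in $k$ and show that all such corrections assemble into a uniformly bounded Toeplitz-type operator; that is precisely the content that Ma--Zhang extract systematically from the superconnection heat kernel. So your strategy is viable in outline but the step you label the ``heavy lifting'' is where the theorem actually lives, and the scaling heuristic you give for it does not survive scrutiny.
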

	As we shall explain below, Theorem \ref{thm_mazh} is the crucial ingredient connecting fibered Yang-Mills functionals with Hermitian Yang-Mills functionals.
	But before this, let us mention another application of Theorem \ref{thm_mazh} to the study of Harder-Narasimhan slopes of direct images.
	\par 
	We fix now a \textit{Gauduchon Hermitian form} $\omega_B$ on $B$.
	As before Theorem \ref{thm_conv_meas}, we denote by $\mu_{\max}^k$ the maximal Harder-Narasimhan $\omega_B$-slope of $E_k$.
	\begin{prop}\label{prop_bound_slope}
		There is $C > 0$, such that $\mu_{\max}^k \leq C k$ for any $k \in \nat^*$.
	\end{prop}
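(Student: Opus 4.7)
The plan is to apply the standard Gauss--Codazzi upper bound for the degree of a saturated subsheaf, and to control the curvature of $(E_k, h^{E_k})$ to leading order in $k$ using Theorem \ref{thm_mazh}.

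First, I would fix a saturated coherent subsheaf $\mathscr{F} \subset E_k$ of rank $r > 0$; any subsheaf realizing $\mu_{\max}^k$ can be replaced by its saturation without decreasing the slope, so it suffices to bound $\mu(\mathscr{F})$ for such $\mathscr{F}$. On the Zariski-open locus $B_0 \subset B$ (of complement codimension $\geq 2$) where $\mathscr{F}$ is a subbundle of $E_k$, the induced Hermitian metric together with the Gauss--Codazzi formula give $R^{\mathscr{F}} = \iota^* R^{E_k} \iota - A^* \wedge A$, where $A$ is the second fundamental form. Taking trace and pairing against $\omega_B^{m-1}$, the second-fundamental-form contribution pairs non-positively with $\omega_B^{m-1}$, so
\begin{equation*}
\deg \mathscr{F} \leq \frac{\imun}{2\pi} \int_{B} \tr{\pi_{\mathscr{F}} \, R^{E_k}} \wedge \frac{\omega_B^{m-1}}{(m-1)!},
\end{equation*}
where $\pi_{\mathscr{F}}$ is the orthogonal projector onto $\mathscr{F}$. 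The extension of the integral from $B_0$ to $B$ is justified by the codimension $\geq 2$ of the singular locus, together with $\partial \dbar \omega_B^{m-1} = 0$, which is what makes the Bott--Chern degree well-defined in the Gauduchon setting.

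Second, I would rewrite the right-hand side as an integral of a trace on fibers: $\tr{\pi_{\mathscr{F}} R^{E_k}} \wedge \omega_B^{m-1} = \tr{\pi_{\mathscr{F}} \wedge_{\omega_B} R^{E_k}} \cdot \omega_B^{m}$, where $\wedge_{\omega_B}$ denotes the trace operator applied fiberwise to the $(1,1)$-part. By Theorem \ref{thm_mazh}, applied after contraction with $\omega_B^{m-1}/\omega_B^m$, we have pointwise on $B$ the operator-norm bound
\begin{equation*}
\Big\| \tfrac{\imun}{2\pi} \wedge_{\omega_B} R^{E_k} - k \, T_k\big( \wedge_{\omega_B} \omega_H \big) \Big\| \leq C,
\end{equation*}
where $T_k$ is applied to the smooth bounded function $\wedge_{\omega_B} \omega_H$ on the fiber. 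Proposition \ref{prop_toepl} then gives $\| T_k(\wedge_{\omega_B} \omega_H) \| \leq \sup_{X} |\wedge_{\omega_B} \omega_H|$, which is finite since $X$ is compact and $\omega_H$ is smooth. Combining these yields a uniform estimate
\begin{equation*}
\Big\| \tfrac{\imun}{2\pi} \wedge_{\omega_B} R^{E_k} \Big\| \leq C_1 \cdot k
\end{equation*}
for some $C_1$ independent of $k$ and of the fiber.

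Third, since $\pi_{\mathscr{F}}$ is an orthogonal projector of rank $r$, the standard trace inequality $|\tr{\pi_{\mathscr{F}} M}| \leq r \cdot \|M\|$ for self-adjoint $M$ gives $|\tr{\pi_{\mathscr{F}} \tfrac{\imun}{2\pi} \wedge_{\omega_B} R^{E_k}}| \leq C_1 r k$ pointwise. Integrating against $\omega_B^m$ and dividing by $r$, I obtain $\mu(\mathscr{F}) = \deg \mathscr{F} / r \leq C_2 \, k$, with $C_2$ depending only on $B$, $\omega_B$ and the family, not on $k$ or $\mathscr{F}$. Taking the supremum over saturated subsheaves concludes the proof.

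The main technical point to be careful about is the step from pointwise (in $b \in B$) operator-norm bounds on the endomorphism-valued $(1,1)$-form $R^{E_k}$ to a trace bound paired with $\omega_B^{m-1}$; this is handled cleanly by first contracting with $\omega_B^{m-1}$ to convert to an endomorphism-valued function, at which point Theorem \ref{thm_mazh} and Proposition \ref{prop_toepl} apply directly. The Gauduchon (non-Kähler) nature of $\omega_B$ enters only through well-definedness of $\deg$ and of the Gauss--Codazzi integration by parts on the saturated locus, both of which require only $\partial \dbar \omega_B^{m-1} = 0$.
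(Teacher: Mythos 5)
Your proof is correct, and the essential new ingredient — obtaining the $O(k)$ operator-norm bound on $\frac{\imun}{2\pi}\wedge_{\omega_B} R^{E_k}$ from Theorem \ref{thm_mazh} together with the spectral bound $\inf f \cdot \mathrm{Id} \le T_k(f) \le \sup f \cdot \mathrm{Id}$ of Proposition \ref{prop_toepl} — is exactly the one the paper uses. The difference is purely in the classical bookkeeping that converts a pointwise curvature bound into a bound on $\mu_{\max}^k$. You work directly with a saturated subsheaf $\mathscr{F}\subset E_k$, invoking Gauss--Codazzi on the locally free locus, the non-negativity of the second-fundamental-form term, the extension of the degree integral across the codimension-$\ge 2$ singular set in the Gauduchon setting, and the elementary trace inequality $|\mathrm{Tr}(\pi_\mathscr{F} M)|\le r\|M\|$. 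The paper instead routes through exterior powers: it bounds $\wedge_{\omega_B} R^{\wedge^p E_k}$ by $Cpk$, observes that curvature decreases in holomorphic line subbundles of $\wedge^p E_k$, and then appeals to the classical identity of \cite[Lemma 5.7.16, Theorem 5.8.3]{KobaVB} expressing $\mu_{\max}^k$ as a supremum over line subbundles of wedge powers, which packages the singular-locus/Gauss--Codazzi analysis once and for all. The two arguments are equivalent in content (the Kobayashi lemma is itself proven by Gauss--Codazzi applied to $\det\mathscr{F}\hookrightarrow \wedge^p E_k$); yours is slightly more self-contained but asks you to re-justify the $L^1$-extension of the Chern--Weil integrand of a saturated subsheaf in the Gauduchon case, which the paper simply outsources. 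Both are valid, so this is essentially the same proof with a different choice of which classical lemma to cite.
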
	 
	\begin{proof}
		For $p = 1, \ldots, \rk{E_k}$, we denote by $R^{\wedge^p E_k}$ the curvature of the Chern connection on $\wedge^p E_k$, induced by the metric $h^{\wedge^p E_k}$ induced by $h^{E_k}$. 
		By Theorem \ref{thm_mazh} and the very definition of $\wedge_{\omega_B}$ from (\ref{eq_ma_fhe}), we conclude that there is $C > 0$, such that for any $k \in \nat^*$, we have
		\begin{equation}\label{eq_bnd_wedge}
			\frac{\imun}{2 \pi} \wedge_{\omega_B} R^{\wedge^p E_k} \leq C p k \cdot {\rm{Id}}_{\wedge^p E_k}.
		\end{equation}
		Let $F$ be a line subbundle of $\wedge^p E_k$.
		We denote by $h^F$ the Hermitian metric on $F$, induced by the metric $h^{E_k}$.
		By (\ref{eq_bnd_wedge}) and the well-known principle that curvature decreases in holomorphic sub-bundles, cf. \cite[(V.14.6)]{DemCompl}, we deduce
		\begin{equation}\label{eq_bound_slope1}
			\wedge_{\omega_B} c_1(F, h^F)
			\leq
			C p k.
		\end{equation}
		However, it is classical, cf. \cite[Proofs of Lemma 5.7.16 and Theorem 5.8.3]{KobaVB}, that we have
		\begin{equation}\label{eq_bound_slope2}
			\mu_{\max}^k \leq \max_{p = 1, \ldots, \rk{E_k}} \sup_{F \subset \wedge^p E_k} \frac{1}{p} \int_B c_1(F, h^F) \wedge \omega_B^{m - 1},
		\end{equation}
		where the second supremum is taken over line subbundles $F$.
		We conclude by (\ref{eq_bound_slope1}) and (\ref{eq_bound_slope2}).
	\end{proof}
	
	\begin{proof}[Proof of Theorem \ref{thm_conv_meas}]
		Taking into account the linear bound from Proposition \ref{prop_bound_slope}, the proof of Theorem \ref{thm_conv_meas} is the same as in Chen \cite{ChenHNolyg} and \cite[Theorem 1.1]{FinHNI}.
		Let us briefly recall the main steps for completeness.
		We introduce the (non-increasing) filtrations $\mathcal{F}_k(\lambda)$, $\lambda \in \real$, of $E_k$ by coherent (torsion-free) subsheaves (defined over $B$), so that $\mathcal{F}_k(\lambda)$ is the maximal subsheaf of $E_k$ such that all of its Harder-Narasimhan slopes are bigger than $\lambda$.
		The filtration $\mathcal{F}_k$ is just a “renaming" of the Harder-Narasimhan filtration of $E_k$.
		Now, for any $b \in B$, we denote by $\mathcal{F}_b$ the filtration induced by $\mathcal{F}_k(\lambda)$ on $R(X_b, L_b) = \oplus_{k = 0}^{\infty} H^0(X_b, L_b^{k})$ of the fiber $X_b = \pi^{-1}(b)$, $L_b = L|_{X_b}$, $b \in B$.
		It was established in \cite{ChenHNolyg} for $\dim B = 1$ and in \cite[Proposition 2.5]{FinHNI} for any projective $B$, that for generic $b \in B$, the above filtration is submultiplicative, i.e. for any $t, s \in \real$, $k, l \in \nat$, we have
		\begin{equation}
			\mathcal{F}_b^t H^0(X_b, L_b^{k}) \cdot \mathcal{F}_b^s H^0(X_b, L_b^{l}) \subset \mathcal{F}_b^{t + s} H^0(X_b, L_b^{k + l}).
		\end{equation}
		Remark, however, that the projectivity assumption was never used in \cite[Proposition 2.5]{FinHNI}, and so submultiplicativity holds for general complex manifolds $B$.
		Theorem \ref{thm_conv_meas} is then a formal consequence of the submultiplicativity and Proposition \ref{prop_bound_slope}, saying that the above filtration is bounded in the terminology of \cite{BouckChen}.
		For the (different) proofs of this last result, see \cite[Théorème 3.4.3]{ChenHNolyg}, \cite[Theorem A]{BouckChen} and \cite[Theorem 1.9]{FinNarSim}.
	\end{proof}
	
	Let us now recall some crucial facts from Hermitian Yang-Mills theory, following the pioneering work of Atiyah-Bott \cite{AtiyahBott} and later developments by Donaldson \cite{DonaldASD}, Daskalopoulos-Wentworth \cite{DaskWent} and others.
	We fix a compact complex manifold $B$ of dimension $m$ with a Gauduchon Hermitian form $\omega_B$ on $B$.
	Let $E$ be a holomorphic vector bundle of rank $r$ over $B$.
	For a Hermitian metric $h^E$ on $E$, we denote by $R^E$ its curvature.
	For any $p \in [1, +\infty[$, $c \in \real$, we define the \textit{Hermitian Yang-Mills functional} as
	\begin{equation}
		\begin{aligned}
		&
		{\rm{HYM}}_{p, c}(E, h^E) 
		:=
		\int_B {\rm{Tr}} \Big[ \Big| \frac{\imun}{2 \pi} \wedge_{\omega_B} R_x^E - c \cdot {\rm{Id}}_E \Big|^p \Big] \omega_B^m(x),
		\\
		&
		{\rm{HYM}}_{+\infty, c}(E, h^E) 
		:=
		\sup_{x \in X} \Big\| \frac{\imun}{2 \pi} \wedge_{\omega_B} R_x^E - c \cdot {\rm{Id}}_E \Big\|,
		\end{aligned}
	\end{equation}
	where $\| \cdot \|$ means the operator norm, and $|A| := \sqrt{A A^*}$ for $A \in \enmr{V}$ on a Hermitian vector space $(V, H)$.
	\par
	As in (\ref{eq_eta_defn}), we denote the Harder-Narasimhan $\omega_B$-slopes of $E$ by $\mu_1, \ldots, \mu_r$. 
	Let $\mu_{\min} := \mu_r$, $\mu_{\max} := \mu_{1}$, be the minimal and the maximal slopes. We define the probability measure
	\begin{equation}
		\mu_E := \frac{1}{r} \sum_{i = 1}^{r} \delta[\mu_i].
	\end{equation}
	The following result lies at the heart of this paper.  
	\begin{thm}\label{thm_ym}
		For any $c \in \real$ and a Hermitian metric $h^E$ on $E$, we have
		\begin{equation}
			{\rm{HYM}}_{+\infty, c}(E, h^E) 
			\geq
			\max \Big\{ | \mu_{\min} - c |, | \mu_{\max} - c | \Big\}.
		\end{equation}
		If, moreover, $\omega_B$ is Kähler, then for any $p \in [1, +\infty[$, we have
		\begin{equation}\label{eq_obs_ymmm}
			{\rm{HYM}}_{p, c}(E, h^E) 
			\geq
			r
			\cdot
			\int_{\real} |x - c|^p  d \mu_E(x)
		\end{equation}
	\end{thm}
	For the proof of Theorem \ref{thm_ym} for $p \in [1, +\infty[$, see Atiyah-Bott \cite[Proposition 8.20]{AtiyahBott} if $\dim B = 1$, Daskalopoulos-Wentworth \cite[Lemma 2.17, Corollary 2.22, Proposition 2.25]{DaskWent} if $B$ is Kähler of any dimension (even though the article \cite{DaskWent} is written for surfaces, cf. Sibley \cite[\S 3.1]{SibleyYangMills}).
	For the proof of the first part, consult Li-Zhang-Zhang \cite[Theorem 1.5]{LiZhaingHN}.
	It is a remarkable that the bounds from Theorem \ref{thm_ym} are actually tight.
	Although we will not use this result in what follows, we state it for the reader's convenience, as it clarifies our motivation for the Conjecture.
	\begin{thm}\label{thm_ym_tight}
		In the notations Theorem \ref{thm_ym}, assume that $\omega_B$ is Kähler. 
		Then for any $c \in \real$, $p \in [1, +\infty[$, we have
		\begin{equation}\label{eq_ym_tight1}
			\inf_{h^E} {\rm{HYM}}_{p, c}(E, h^E) 
			=
			r
			\cdot
			\int_{\real} |x - c|^p  d \mu_E(x)
			\cdot
			\int_B [\omega_B^m].
		\end{equation}
		where the infimum is taken over all Hermitian metrics $h^E$ on $E$.
	\end{thm}
	For the proof of Theorem \ref{thm_ym_tight} for $p \in [1, +\infty[$, see \cite[Proposition 8.20]{AtiyahBott} if $\dim B = 1$.
	For higher dimensions, this is a direct consequence of the existence of $L^p$-approximate critical hermitian structure on $E$, see \cite[Definition 3.9]{DaskWent} for the definition, and \cite[Theorem 3.11]{DaskWent} and Sibley \cite[Theorem 1.3]{SibleyYangMills} for the proofs if $B$ is Kähler of dimension $2$ and any dimension respectively. 
	See also Jacob \cite[Theorems 2, 3]{JacobYangMillsAB}.
	\begin{proof}[Proof of Theorem \ref{thm_obs}]
		Preserving the notation introduced in (\ref{eq_eta_defn}), we define the probability measure, $\eta_{k, 0}^{HN}$, $k \in \nat$, on $\real$ as
		\begin{equation}\label{eq_eta_defn22}
			\eta_{k, 0}^{HN} := \frac{1}{N_k} \sum_{i = 1}^{N_k} \delta \big[ \mu_i^k \big],
		\end{equation}
		We apply Theorem \ref{thm_ym} for $(E_k, h^{E_k})$, $k \in \nat$, $c \in \real$, to get
		\begin{equation}\label{eq_HYM_k}
			{\rm{HYM}}_{+\infty, ck}(E_k, h^{E_k}) 
			\geq
			\max \Big\{ | \mu_{\min}^k - c k |, | \mu_{\max}^k - c k | \Big\}.
		\end{equation}
		If, moreover, $\omega_B$ is Kähler, then for any $p \in [1, +\infty[$, we have
		\begin{equation}\label{eq_HYM_k2}
			{\rm{HYM}}_{p, c k}(E_k, h^{E_k}) 
			\geq
			N_k
			\cdot
			\int_{\real} |x - c k|^p  d \eta_{k, 0}^{HN}(x)
			\cdot
			\int_B [\omega_B^m].
		\end{equation}
		\begin{sloppypar}
		Directly from Theorem \ref{thm_mazh} and Proposition \ref{prop_toepl}, under the respective assumptions, for any $p \in [1, +\infty[$, $c \in \real$, we have
		\begin{equation}\label{eq_fym_sm_limit}
			\begin{aligned}
			&
			\lim_{k \to \infty} \frac{{\rm{HYM}}_{p, c k}(E_k, h^{E_k})}{k^p \cdot N_k} 
			=
			\frac{{\rm{FYM}}_{p, c}(\pi, h^L)}{\int_{X_b} [\omega^n]},
			\\
			&
			\lim_{k \to \infty} \frac{{\rm{HYM}}_{+ \infty, c k}(E_k, h^{E_k})}{k} 
			=
			{\rm{FYM}}_{+ \infty, c}(\pi, h^L),
			\end{aligned}
		\end{equation}
		where $b \in B$ is an arbitrary point, and $X_b$ is the fiber of $\pi$ at $b$.
		\end{sloppypar}
		We now divide both sides of the first inequality of (\ref{eq_HYM_k}) by $k^p \cdot N_k$, take the limit $k \to \infty$, and apply Theorem \ref{thm_conv_meas} and (\ref{eq_fym_sm_limit}) to deduce
		\begin{equation}
			{\rm{FYM}}_{p, c}(\pi, h^L) 
			\geq
			\int_{\real} \big|x - c \big|^p d \eta^{HN}(x)
			\cdot
			\int_{X_b} [\omega^n] \cdot \int_B [\omega_B^m].
		\end{equation}		
		This establishes Theorem \ref{thm_obs} for $p \in [1, +\infty[$, as $\int_{X_b} [\omega^n] \cdot \int_B [\omega_B^m] = \int_X [\omega^n] \cdot \pi^*[\omega_B^m]$.
		To get Theorem \ref{thm_obs} for $p = +\infty$, we divide both sides of the second inequality of (\ref{eq_HYM_k}) by $k$, take limit $k \to \infty$, and apply Theorem \ref{thm_conv_meas} and (\ref{eq_fym_sm_limit}).
	\end{proof}

	\section{Horizontal curvature on subfamilies and generic fibered nefness}\label{sect_hor_curv}
	The main goal of this section is to establish Theorems \ref{thm_trshld}, \ref{thm_generic_nef}.
	For this, we construct a sequence of metrics on the polarizing line bundle from a sequence of Hermitian metrics on direct images.
	To show that horizontal mean curvature behaves well under this procedure, we rely on a fibered analogue of the principle that “a curvature of a vector bundle increases under taking quotients".	
	\par 
	More precisely, consider a holomorphic submersion $\pi : X \to B$ between compact complex manifolds $X$ and $B$.
	We denote $m := \dim B$.
	Consider an embedding $\iota : Y \hookrightarrow X$ of a smooth complex manifold $Y$, such that restriction of $\pi$, $\pi|_Y : Y \to B$, is a submersion.
	We fix a $(1, 1)$-form $\omega_X$ on $X$, which is positive along the fibers of $\pi$, and denote $\omega_Y := \iota^* \omega_X$.
	We fix a Hermitian $(1, 1)$-form $\omega_B$ on $B$, and denote by $\wedge_{\omega_B} \omega_{Y, H} \in \ccal^{\infty}(Y)$, $\wedge_{\omega_B} \omega_{X, H} \in \ccal^{\infty}(X)$, the horizontal mean curvatures of $\omega_Y$ and $\omega_X$ respectively.
	\begin{lem}\label{lem_hor_curv}
		For any $y \in Y$, we have
			$\wedge_{\omega_B} \omega_{Y, H}(y)
			\geq
			\wedge_{\omega_B} \omega_{X, H}( \iota(y) )$.
	\end{lem}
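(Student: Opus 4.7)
The plan is to prove the inequality pointwise at an arbitrary $y \in Y$ by working with horizontal lifts and exploiting the positivity of $\omega_X$ along the fibers. Set $b := \pi(y)$, $x := \iota(y)$, and fix a vector $v \in T_b^{1,0} B$. I would denote by $\tilde{v}_X \in T_x^{H} X$ the $\omega_X$-horizontal lift of $v$ at $x$, and by $\tilde{v}_Y \in T_y^H Y$ the $\omega_Y$-horizontal lift of $v$ at $y$ (both well-defined since $\pi$ and $\pi|_Y$ are submersions and $\omega_X$, $\omega_Y$ are positive along the corresponding fibers). Under the identifications $T^H X \cong \pi^* TB$ and $T^H Y \cong (\pi|_Y)^* TB$, the definition of the horizontal forms gives
\begin{equation*}
\omega_{X,H}(x)(v, \bar v) = \omega_X(\tilde v_X, \bar{\tilde v}_X), \qquad \omega_{Y,H}(y)(v, \bar v) = \omega_Y(\tilde v_Y, \bar{\tilde v}_Y) = \omega_X(\tilde v_Y, \bar{\tilde v}_Y),
\end{equation*}
the last equality holding because $\omega_Y = \iota^* \omega_X$ and $\tilde v_Y \in T_y Y$.

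Next, since both $\tilde v_X$ and $\tilde v_Y$ are lifts of the same vector $v$, their difference $w := \tilde v_Y - \tilde v_X$ lies in the kernel of $\pi_*$, i.e. $w \in T^V_x X$. Using that $\tilde v_X$ is $\omega_X$-orthogonal to $T^V_x X$ by the very definition of $T^H X$, the cross terms in the expansion
\begin{equation*}
\omega_X(\tilde v_X + w, \bar{\tilde v_X + w}) = \omega_X(\tilde v_X, \bar{\tilde v}_X) + \omega_X(\tilde v_X, \bar w) + \omega_X(w, \bar{\tilde v}_X) + \omega_X(w, \bar w)
\end{equation*}
vanish, yielding the pointwise identity
\begin{equation*}
\omega_{Y,H}(y)(v, \bar v) - \omega_{X,H}(x)(v, \bar v) = \omega_X(w, \bar w) \geq 0,
\end{equation*}
where positivity follows because $w \in T^V_x X$ and $\omega_X$ is positive along the fibers.

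Thus $\omega_{Y,H}(y) - \omega_{X,H}(\iota(y))$ is a semi-positive $(1,1)$-form on $T_b B$. To conclude, I would invoke the standard fact that wedging a semi-positive $(1,1)$-form with $\omega_B^{m-1}$ (where $\omega_B$ is a positive Hermitian $(1,1)$-form on $B$) produces a semi-positive volume form; equivalently, $\wedge_{\omega_B}$ maps semi-positive $(1,1)$-forms to nonnegative functions. Dividing by $\omega_B^m$ then gives $\wedge_{\omega_B} \omega_{Y,H}(y) \geq \wedge_{\omega_B} \omega_{X,H}(\iota(y))$, which is the desired inequality. There is no serious obstacle here; the only point requiring care is the correct bookkeeping of the three different $(1,1)$-forms (on $X$, $Y$, and their horizontal parts viewed on $B$), and the observation that the crucial positivity is precisely the assumed positivity of $\omega_X$ along fibers, which is inherited by $\omega_Y$ but enters only through the vertical correction $w$.
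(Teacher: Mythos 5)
Your proposal is correct and follows essentially the same approach as the paper's proof: both decompose the $\omega_Y$-horizontal lift as the $\omega_X$-horizontal lift plus a vertical correction, exploit the orthogonality of $T^HX$ and $T^VX$ with respect to $\omega_X$ to kill the cross terms, and use the fiberwise positivity of $\omega_X$. The only cosmetic difference is that the paper works directly with an $\omega_B$-orthonormal frame of $T_bB$ and sums, while you first establish semi-positivity of $\omega_{Y,H}(y) - \omega_{X,H}(\iota(y))$ on all of $T_bB$ and then contract with $\omega_B$; the two are equivalent.
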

	\begin{rem}
		An equivalent result was established in \cite[(2.2)]{FengLiuWang} by a slightly different method.
	\end{rem}
	\begin{proof}
		Let us fix $y \in Y$, and denote by $b := \pi|_Y(y)$, $x := \iota(y)$, and by $e_1, \ldots, e_m$ an orthonormal basis of $T^{1, 0}_bB$ with respect to $\omega_B$.
		We denote by $e_1^X, \ldots, e_m^X \in T^{1, 0}_x X$ the horizontal lifts of $e_1, \ldots, e_m$, defined with respect to $\omega_X$, i.e. $d \pi(e_i^X) = e_i$, $i = 1, \ldots, m$ and $e_1^X, \ldots, e_m^X$ are orthogonal (with respect to $\omega_X$) to the tangent space of the fibers, $T^V X$, of $\pi$.
		Similarly, we denote by $e_1^Y, \ldots, e_m^Y \in T^{1, 0}_y Y$ the horizontal lifts of $e_1, \ldots, e_m$, defined with respect to $\omega_Y$.
		Clearly, using implicitly the embedding of $T_y Y$ in $T_x X$ through $\iota$, we can write $e_i^Y = e_i^X + v_i$, where $v_i \in T^V_x X$.
		But then, since $e_i^X$ and $v_i$ are orthogonal with respect to $\omega_X$, and $\omega_X$ is positive in the vertical directions, we obtain
		$\imun \omega_Y(e_i^Y, \overline{e}_i^Y) = \imun \omega_X(e_i^X, \overline{e}_i^X) + \imun \omega_X(v_i, \overline{v}_i) \geq \imun \omega_X(e_i^X, \overline{e}_i^X)$.
		By taking a sum of the above inequality over all $i = 1, \ldots, m$, we establish the needed inequality.
	\end{proof}
	Another ingredient we need is the calculation of the horizontal mean curvature for projectivizations of vector bundles.
	More precisely, let $(F, h^F)$ be a Hermitian vector bundle over $B$ of rank $r$. 
	Let $\mathcal{O}(1)$ be the hyperplane bundle over $\mathbb{P}(F^*)$, $\pi : \mathbb{P}(F^*) \to X$.
	We endow $\mathcal{O}(1)$ with the metric $h^{\mathcal{O}(1)}$ induced by $h^F$.
	We denote by $R^F$ the curvature of the Chern connection on $(F, h^F)$, by $\omega$ the first Chern class of $(\mathcal{O}(1), h^{\mathcal{O}(1)})$, and by $\omega_H$ its horizontal component.
	\begin{lem}\label{lem_low_bnd}
		In the above notations, for any $x \in X$, we have 
		\begin{equation}
			\inf_{y \in \mathbb{P}(F_x^*)} \wedge_{\omega_B} \omega_H(y) = \inf_{ \substack{ f \in F_x, \| f \|_{h^F} = 1}}  \Big\langle \frac{\imun}{2 \pi} \wedge_{\omega_B} R^F f, f \Big\rangle_{h^F}.
		\end{equation}
	\end{lem}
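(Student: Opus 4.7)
The plan is to reduce the lemma to a pointwise identity expressing the horizontal component of the Chern form $\omega = c_1(\mathcal{O}(1), h^{\mathcal{O}(1)})$ at a point of the fiber $\mathbb{P}(F^*_x)$ in terms of the curvature $R^F$. I would first observe that the Hermitian metric $h^F$ induces an antilinear isomorphism $F_x \simeq F^*_x$, hence a diffeomorphism between $\mathbb{P}(F^*_x)$ and the quotient of the unit sphere of $(F_x, h^F)$ by the natural $S^1$-action. Denote by $f \in F_x$ a unit vector whose class corresponds to $y \in \mathbb{P}(F^*_x)$; under this correspondence, the right-hand side of the lemma's identity becomes an infimum over unit $f$.

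The heart of the proof is the pointwise formula
\[
\omega_H(y) \;=\; \frac{\imun}{2\pi}\big\langle R^F(\cdot, \cdot) f, f \big\rangle_{h^F}\Big|_{x},
\]
viewed as a $(1,1)$-form on $T_x B$ pulled back via $\pi$. To establish it, I would fix a K\"ahler normal holomorphic frame $e_1, \ldots, e_r$ for $(F, h^F)$ at $x$ with $e_1(x) = f$, and work in the affine chart of $\mathbb{P}(F^*)$ around $y = [e_1^*(x)]$ adapted to this frame. Expanding $\log \|\xi\|^2_{h^{F^*}}$ for the tautological section $\xi$ of $\mathcal{O}(-1) \hookrightarrow \pi^* F^*$, the purely vertical $\partial \dbar$-terms give $\omega_V$ (the fiberwise Fubini-Study form) and the mixed terms drop out when restricted to pure base directions; the pure base-direction $(1,1)$-part at $z = 0$ equals $\langle R^F(\cdot, \cdot) e_1, e_1 \rangle_{h^F}$, thanks to the vanishing of all first-order terms of $h^{F^*}$ in the normal frame. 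The sign is fixed by the standard relation between $c_1(\mathcal{O}(-1))$ and $c_1(\mathcal{O}(1))$.

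With the pointwise identity at hand, contracting both sides with $\omega_B^{m-1}/\omega_B^m$ yields
\[
\wedge_{\omega_B}\omega_H(y) \;=\; \Big\langle \frac{\imun}{2\pi}\wedge_{\omega_B} R^F f, f\Big\rangle_{h^F},
\]
and the lemma follows by taking the infimum over $y \in \mathbb{P}(F^*_x)$, which through the correspondence $y \leftrightarrow f$ becomes the infimum over unit vectors $f \in F_x$. The main obstacle is the careful bookkeeping in the derivation of the pointwise identity, in particular verifying that the horizontal distribution on $\mathbb{P}(F^*)$ defined by $\omega$ agrees at $y$ with the one coming from the Chern connection on $F^*$; the K\"ahler normal frame is precisely the device that makes all unwanted cross terms in $\partial \dbar \log \|\xi\|^2_{h^{F^*}}$ vanish at the chosen point, after which the remainder is a short evaluation.
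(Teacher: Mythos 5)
Your proposal follows essentially the same route as the paper. The paper also fixes a normal holomorphic frame $f_1,\ldots,f_r$ for $(F,h^F)$ at $x$ (with $\scal{f_i}{f_j}_{h^F}=\delta_{ij}+O(|z|^2)$), works in the affine chart of $\mathbb{P}(F^*)$ adapted to the dual frame around $y=[f_1^*]$, and reads off the decomposition of $c_1(\mathcal{O}(1),h^{\mathcal{O}(1)})$ at that point into a vertical Fubini--Study term $\sum_{j\geq 2} db_j\wedge d\overline{b}_j$ plus the horizontal contribution $\scal{R^F f_1}{f_1}_{h^F}$, then contracts with $\omega_B$ and takes the infimum over $y$. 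The only cosmetic difference is that the paper invokes the ready-made formula \cite[(V.15.15)]{DemCompl} for the curvature of $\mathcal{O}(1)$ at the point, whereas you propose to rederive it by expanding $-\partial\dbar\log\|\xi\|^2_{h^{F^*}}$ in the normal frame; that expansion is precisely how Demailly's formula is obtained, so the two are the same argument. Your remark that the $\omega$-horizontal distribution at $y$ coincides with the Chern-connection horizontal distribution is correct and is implicit in the vanishing of the mixed $db\wedge d\bar z$ terms in the cited formula.
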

	\begin{proof}
	We fix some local coordinates $z := (z_1, \ldots, z_n)$ on $B$, centered at $x \in B$, and a local normal frame $f_1, \ldots, f_{r}$ of $F$ at $x$, defined in a neighborhood $U$ of $x$. By a \textit{normal frame} we mean a holomorphic frame satisfying $\scal{f_i}{f_j}_{h^F} = \delta_{ij} - \sum_{\lambda \mu} d_{\lambda \mu i j} z_{\lambda} \overline{z}_\mu + O(|z|^3)$ for some constants $d_{\lambda \mu i j}$.
	We denote by $f_1^*, \ldots, f_r^*$ the dual frame of $F^*$. 
	The above data defines a trivialization of $U \times \mathbb{P}(\comp^{r}) \to \mathbb{P}(F^*)$ near $\pi^{-1}(x)$ as follows.
	For $a := (a_1, \ldots, a_{r})$, where $a_i \in \comp$, $1 \leq i \leq r$, and not all $a_i$ are equal to zero, the trivialization is given by the map
	$(z, [a]) 
		\to
		[ \sum_{i = 1}^{r} a_i f_i^*(z) ] \in \mathbb{P}(F^*)$.
	Now we take $a_1 = 1$ and denote $b_i := a_i$, $2 \leq i \leq r$, $b := (b_i)$. Then $(z, b)$ gives a chart for $\mathbb{P}(F^*)$.  
	The well-known formula, cf. \cite[Formula (V.15.15)]{DemCompl}, shows that at the point $(x, [f_1^*]) \in \mathbb{P}(F^*)$, the curvature, $R^{\mathcal{O}(1)}$, of the hyperplane bundle $(\mathcal{O}(1), h^{\mathcal{O}(1)})$, equals
		\begin{equation}\label{eq_mour_form}
			R^{\mathcal{O}(1)}_{(x, [f_1^*])} = \sum_{2 \leq j \leq r} 
			 db_j \wedge d\overline{b}_j
			+
			\scal{R^F f_1}{f_1}_{h^F}.
		\end{equation}
	\par 
	In particular, we see that the vertical part of the form $\omega = c_1(\mathcal{O}(1), h^{\mathcal{O}(1)})$ is the Fubini-Study form induced by $h^F$, and the horizontal part of $\omega$, $\omega_H$, evaluated at $(x, [f_1^*]) \in \mathbb{P}(E^*)$, coincides with $\frac{\imun}{2 \pi} \scal{R^F f_1}{f_1}_{h^F}$.
	The result follows directly from this.
	\end{proof}
	\begin{rem}\label{eq_he_fe_rel}
	From the proof of the above lemma, we see that $\wedge_{\omega_B} \omega_H$ is constant if and only if $\wedge_{\omega_B} R^F$ is the identity endomorphism up to a constant, which means that $\omega$ is fibered Einstein if and only if $(F, h^F)$ is Hermite Einstein.
	\end{rem}
	\par 
	Recall also the following result.
	\begin{prop}[{ \cite[Theorem 1.5]{LiZhaingHN} }]\label{cor_low_bnd_curv}
		For any $\epsilon > 0$, there is a Hermitian metric $h^E_{\epsilon}$ on $E$, such that the associated curvature, $R^{E}_{\epsilon}$, for any $b \in B$, $e \in E_b$, verifies
		$
			\frac{\imun}{2 \pi} \wedge_{\omega_B} R^{E}_{\epsilon} \geq (\mu_{\min} - \epsilon) \cdot {\rm{Id}}_E.
		$
	\end{prop}

	\begin{proof}[Proof of Theorem \ref{thm_trshld}.]
		First of all, for a given $\epsilon > 0$, let $k \in \nat$ be such that $\frac{\mu_{\min}^k}{k} > \eta_{\min}^{HN} - \frac{\epsilon}{2}$.
		By Corollary \ref{cor_low_bnd_curv}, there is a metric $h_k^{\epsilon}$ on $E_k$ such that for the associated curvature, $R^{E_k}_{\epsilon}$, we have
		\begin{equation}\label{eq_appr_eps_proj0}
			\frac{\imun}{2 \pi} \wedge_{\omega_B}  R^{E_k}_{\epsilon} \geq \Big( \mu_{\min}^k - \frac{\epsilon}{2} \Big) \cdot {\rm{Id}}_{E_k}.
		\end{equation}
		We denote by $\omega_k$ the $(1, 1)$-form on $\mathbb{P}(E_k^*)$, given by the first Chern class of the curvature of the hyperplane line bundle induced by the metric $h_k^{\epsilon}$.
		We denote by $\omega_{H, k}$ the horizontal part of this curvature.
		From Lemma \ref{lem_low_bnd}, the choice of $k \in \nat$ and (\ref{eq_appr_eps_proj0}), we deduce
		\begin{equation}\label{eq_appr_eps_proj}
			\inf_{x \in \mathbb{P}(E_k^*)} \wedge_{\omega_B} \omega_{H, k}(x) 
			\geq 
			 k \cdot ( \eta_{\min}^{HN} - \epsilon ).
		\end{equation}
		We will now assume that $k$ was chosen big enough so that $L^k$ is relatively ample.
		Consider now the Kodaira embedding $\iota_k : X \hookrightarrow \mathbb{P}(E_k^*)$.
		It is well-known that there is a canonical isomorphism between $\iota_k^* \mathscr{O}(1)$ and $L^k$.
		We denote by $h^L_{\epsilon}$ the metric induced on $L$ by the pull-back; then $\omega(h^L_{\epsilon}) = \frac{1}{k} \iota_k^* \omega_{H, k}$.
		By Lemma \ref{lem_hor_curv} and (\ref{eq_appr_eps_proj}), we conclude that
		 $\inf_{x \in X} \wedge_{\omega_B} \omega_H(h^L_{\epsilon})
		\geq \eta_{\min}^{HN} - \epsilon$.
		Since $\epsilon > 0$ can be taken arbitrarily small, we deduce $\sup_{h^L} \inf_{x \in X} \wedge_{\omega_B} \omega_{H}(h^L)
		\geq \eta_{\min}^{HN}$. 
		In combination with the upper bound from (\ref{eq_ess_sup_inf1}), this finishes the proof.
	\end{proof}
	Now, let us establish Theorem \ref{thm_generic_nef}.
	As in the statement of Theorem \ref{thm_generic_nef}, we fix any Kähler forms $\omega_{B, 1}, \ldots, \omega_{B, m - 1}$, $\omega_X$.
	Then the form $\omega_{B, 1} \wedge \cdots \wedge \omega_{B, m - 1}$ is positive in the sense of \cite[(III.1.1)]{DemCompl}.
	By Michelsohn \cite[(4.8)]{MichelStrPos}, there is a Hermitian form $\omega_B$ on $B$, verifying 
	\begin{equation}\label{eq_multipol_gaud}
		\omega_B^{m - 1} = \omega_{B, 1} \wedge \cdots \wedge \omega_{B, m - 1}.
	\end{equation}
	Remark that $\omega_B$ is automatically Gauduchon, but not necessarily Kähler. 
	Since the form $\omega_B$ is Gauduchon, it makes sense to define the $\omega_B$-degree and study the Harder-Narasimhan $\omega_B$-slopes, as we did before Theorem \ref{thm_singleton}.
	Remark that due to a relation (\ref{eq_multipol_gaud}), this $\omega_B$-degree coincides with the degree associated with a multipolarization $([\omega_{B, 1}], \ldots, [\omega_{B, m - 1}])$, as defined in \cite[before (1.1)]{FinHNI}.
	Below, the invariant $\eta_{\min}^{HN}$ and other quantities are calculated with respect to $\omega_B$.
	The following result, alongside with Theorem \ref{thm_trshld}, lie at the core of the proof of Theorem \ref{thm_generic_nef}.
	\begin{sloppypar}	
	\begin{prop}[{\cite[Proposition 5.2]{FinHNI}}]\label{prop_interpr_st_gen_pos}
		A relatively ample line bundle $L$ over $X$ is stably $([\omega_{B, 1}], \ldots, [\omega_{B, m - 1}])$-generically fibered nef with respect to $\pi$ if and only if $\eta_{\min}^{HN} \geq 0$.
	\end{prop}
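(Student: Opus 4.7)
The plan is to invoke a Mehta--Ramanathan type restriction theorem for the Gauduchon polarization $\omega_B$ defined by (\ref{eq_multipol_gaud}) in order to reduce both implications to assertions over a generic complete intersection curve $C \subset B$ of classes $l_1[\omega_{B,1}], \ldots, l_{m-1}[\omega_{B,m-1}]$. On such a curve, two classical facts are used repeatedly: a vector bundle $F$ is nef if and only if its minimum Harder--Narasimhan slope is non-negative (Hartshorne), and nefness of $F$ is equivalent to nefness of $\mathcal{O}_{\mathbb{P}(F^*)}(1)$. Combined with the Kodaira embedding $\iota_k : X \hookrightarrow \mathbb{P}(E_k^*)$ satisfying $\iota_k^* \mathcal{O}(1) \cong L^k$, these provide a dictionary between the algebraic invariant $\eta_{\min}^{HN}$ and the fibered positivity of $L$ along $\pi$.

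For the direction $(\Leftarrow)$, fix an ample line bundle $L_0$ on $X$ and take $\epsilon \in \mathbb{Q}_{>0}$ arbitrarily small. Under the assumption $\eta_{\min}^{HN} \geq 0$, for all $k$ sufficiently large one has $\mu_{\min}^k \geq -k\epsilon/2$, so by Mehta--Ramanathan the restriction $(E_k \otimes L_0^{\lfloor k\epsilon \rfloor})|_C$ to a generic curve $C$ of sufficiently high degree has strictly positive minimum Harder--Narasimhan slope, hence is nef by Hartshorne. Nefness of the tautological bundle on the associated projective bundle over $C$ then pulls back along $\iota_k$ to nefness of $L \otimes L_0^{\epsilon'}|_{\pi^{-1}(C)}$ for some $\epsilon' = \epsilon'(\epsilon)$ tending to $0$ with $\epsilon$, establishing stable generic fibered nefness of $L$.

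The main obstacle is the converse $(\Rightarrow)$, because nefness of a twist of $L$ on $\pi^{-1}(C)$ does not transparently imply nefness of the associated vector bundle $E_k|_C$: the Kodaira image $\iota_k(\pi^{-1}(C))$ is in general a proper subvariety of $\mathbb{P}(E_k^*|_C)$, so a destabilizing linear subbundle need not be detected by a curve lying inside the image. My strategy would be to convert a hypothetical minimum-slope destabilizing quotient of $E_k|_C$ into an effective curve inside $\iota_k(\pi^{-1}(C))$ with prescribed negative $L$-degree, contradicting the stable generic fibered nefness. Concretely, one passes to symmetric powers: using the natural surjection $S^j E_{k_0} \twoheadrightarrow E_{jk_0}$ valid for $k_0$ large, together with the asymptotic identity $\lim_j \mu_{\min}(S^j E_{k_0})/j = \mu_{\min}(E_{k_0})$ in the spirit of Chen, one pulls back destabilizing quotients from $E_{jk_0}$ to $S^j E_{k_0}$ and realizes them projectively as linear subbundles which, for suitable dimension reasons, must meet $\iota_{jk_0}(\pi^{-1}(C))$ in curves on which $L \otimes L_0^{\epsilon}$ acquires negative degree as $\epsilon \to 0$. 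Making the dimension count in this last step rigorous is the crux of the argument.
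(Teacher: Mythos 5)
The paper does not prove this proposition; it is quoted directly from \cite[Proposition 5.2]{FinHNI}, and the surrounding text indicates what the proof there rests on: the algebraic description of $\eta_{\min}^{HN}$ from \cite[Corollary 1.4 and Theorem 1.4]{FinHNI}, which identifies $\eta_{\min}^{HN}$ (suitably normalized) with the infimum of the $L$-slopes $\mu(C')$ over irreducible curves $C' \subset X$ projecting finitely onto generic complete-intersection curves $C \subset B$. See also (\ref{eq_num_form}) and the remark after the proof of Theorem \ref{thm_nonlinstab}. With this characterization in hand, both implications in the proposition become essentially formal, since generic fibered nefness of $L \otimes L_0^{\epsilon}$ is literally a statement about the $L$-degrees of curves in $\pi^{-1}(C)$ for generic $C$.

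Your proposal takes a different route, working on the projective bundles $\mathbb{P}(E_k^*|_C)$ via Mehta--Ramanathan restriction and Hartshorne's criterion (nefness $\Leftrightarrow$ $\mu_{\min} \geq 0$ on curves). In the $(\Leftarrow)$ direction this is morally sound, but as written there is a bookkeeping slip: $L_0$ is ample on $X$, not on $B$, so $R^0\pi_*\bigl((L \otimes L_0^\epsilon)^k\bigr)$ is not of the form $E_k \otimes (\text{line bundle on } B)^{\lfloor k\epsilon\rfloor}$ in general; one has to choose $L_0$ of the shape $L^a \otimes \pi^* M^b$ (which exists by relative ampleness) and rework the slope estimates accordingly.

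The genuine gap, which you correctly identify, is the $(\Rightarrow)$ direction. A minimum-slope destabilizing quotient of $E_k|_C$ gives a linear subvariety of $\mathbb{P}(E_k^*|_C)$, but $\iota_k(\pi^{-1}(C))$ is a proper subvariety, and there is no a priori reason a destabilizing linear subbundle should intersect the image in a curve of controlled negative $L$-degree; the symmetric-power and dimension-count strategy you sketch is precisely the kind of nontrivial statement that the omitted ingredient, \cite[Theorem 1.4]{FinHNI}, is designed to supply. Without importing that result (or an equivalent theorem relating asymptotic slopes of $E_k$ to degrees of curves in $X$ itself rather than in the ambient projective bundles), the argument does not close. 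In short: your approach is not the paper's, and it stalls at exactly the point where the paper's cited input does the heavy lifting.
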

	\begin{proof}[Proof of Theorem \ref{thm_generic_nef}]
		Let us first establish Theorem \ref{thm_generic_nef} under an additional assumption that $L$ is relatively ample.
		We assume first that $L$ is stably $([\omega_{B, 1}], \ldots, [\omega_{B, m - 1}])$-generically fibered nef with respect to $\pi$.
		By Theorem \ref{thm_trshld} and Proposition \ref{prop_interpr_st_gen_pos}, we establish that for any $\epsilon > 0$, there is a relatively positive Hermitian metric $h^L_{\epsilon}$ on $L$, such that $\wedge_{\omega_B} \omega_H(h^L_{\epsilon}) > - \epsilon$.
		From the definition of $\wedge_{\omega_B}$ and the trivial fact that there is $C > 0$, such that $\omega_B^m < C \omega_X^m$, we establish 
		\begin{equation}\label{eq_generic_nef101}
			\omega_H(h^L_{\epsilon}) \wedge \pi^* \omega_{B, 1} \wedge \cdots \wedge \pi^* \omega_{B, m - 1}  
			\geq
			-C \epsilon \cdot \omega_X^m.
		\end{equation}
		But the form $\omega(h^L_{\epsilon})$ is relatively positive, so (\ref{eq_generic_nef101}) implies (\ref{eq_generic_nef}) for $\epsilon := C \epsilon$, which finishes the proof of one direction of Theorem \ref{thm_generic_nef} under an additional assumption that $L$ is relatively ample.
		\par 
		To prove the opposite direction under the same additional assumption that $L$ is relatively ample, assume that we have a sequence of metrics $h^L_{\epsilon}$, verifying (\ref{eq_generic_nef}).
		We will now show that one can cook up a sequence of relatively positive metrics, $h^L_{\epsilon, 0}$, verifying similar bounds.
		Indeed, let us fix an arbitrary relatively positive metric $h^L_0$ on $L$. 
		By (\ref{eq_generic_nef}), it is easy to see that there is $c > 0$, such that for any $\epsilon > 0$, over the fibers, the following inequality is satisfied $c_1(L, h^L_{\epsilon}) \geq -\epsilon c \cdot c_1(L, h^L_0)$.
		Then an easy calculation shows that the sequence of metrics $h^L_{\epsilon, 0} := (h^L_{\epsilon})^{1 - 2c \epsilon} \cdot (h^L_0)^{2 c \epsilon}$ is positive along the fibers and verifies the inequality (\ref{eq_generic_nef}) with $C \epsilon$ in place of $\epsilon$, for some $C > 0$.
		Then as in (\ref{eq_generic_nef101}), there is $C > 0$, such that for any $\epsilon > 0$, we have $\wedge_{\omega_B} \omega_H(h^L_{\epsilon, 0}) > - C \epsilon$.
		By Theorem \ref{thm_trshld}, we then conclude that $\eta_{\min}^{HN} \geq 0$, which implies that $L$ is stably $([\omega_{B, 1}], \ldots, [\omega_{B, m - 1}])$-generically fibered nef by Proposition \ref{prop_interpr_st_gen_pos}.
		\par 
		We now only assume that $L$ is relatively nef.
		We assume first that $L$ is stably $([\omega_{B, 1}], \ldots, [\omega_{B, m - 1}])$-generically fibered nef with respect to $\pi$.
		Now, for any $\delta \in \mathbb{Q}$, $\delta > 0$, consider the $\mathbb{Q}$-line bundle $L_{\delta} := L \otimes L_0^{\delta}$, where $L_0$ is some ample line bundle on $X$.
		Clearly, $L_{\delta}$ is relatively ample, and it is also stably $([\omega_{B, 1}], \ldots, [\omega_{B, m - 1}])$-generically fibered nef with respect to $\pi$. 
		The already established relatively ample case of Theorem \ref{thm_generic_nef} says that $L_{\delta}$ is $([\omega_{B, 1}], \ldots, [\omega_{B, m - 1}])$-generically fibered nef if and only if for any $\epsilon > 0$, there is a Hermitian metric $h^{L_{\delta}}_{\epsilon}$ on $L_{\delta}$, such that the analogue of (\ref{eq_generic_nef}) holds. 
		Let $h^L_0$ be now an arbitrary positive metric on $L_0$. 
		It is easy to see that if $\delta$ and $\epsilon$ are sufficiently small, then the metric $h^L_{\epsilon}$ on $L$, which is constructed as the only metric verifying $h^{L_{\delta}}_{\epsilon} = h^L_{\epsilon} \cdot (h^L_0)^{\delta}$, will satisfy the analogue of (\ref{eq_generic_nef}) (for $C \epsilon$ instead of $\epsilon$ for some $C > 0$).
		This shows one direction of Theorem \ref{thm_generic_nef}.
		\par 
		Inversely, if for any $\epsilon > 0$ there is a metric $h^L_{\epsilon}$ as in (\ref{eq_generic_nef}), then the metrics $h^{L_{\delta}}_{\epsilon}$, defined by the above formula, will also satisfy a similar inequality.
		Hence, by the already established case of Theorem \ref{thm_generic_nef}, $L_{\delta}$ is then stably $([\omega_{B, 1}], \ldots, [\omega_{B, m - 1}])$-generically fibered nef for any $\delta \in \mathbb{Q}$, $\delta > 0$.
		In particular, for any $\delta \in \mathbb{Q}$, $\delta > 0$, the line bundle $L_{2 \delta} = L_{\delta} \otimes L_0^{\delta}$ is $([\omega_{B, 1}], \ldots, [\omega_{B, m - 1}])$-generically fibered nef for any $\delta \in \mathbb{Q}$, $\delta > 0$, which means that $L$ is stably $([\omega_{B, 1}], \ldots, [\omega_{B, m - 1}])$-generically fibered nef, as $L_{2 \delta} = L \otimes L_0^{2 \delta}$. This finishes the proof.
	\end{proof}
	\end{sloppypar}

	\section{Asymptotic Riemann-Roch-Grothendieck and semistability}\label{sect_num_obs}
	The main goal of this section is to prove a numerical obstruction for asymptotic semistability of direct images from Theorem \ref{thm_nonlinstab}.
	This will be based on an asymptotic version of Riemann-Roch-Grothendieck theorem, which we establish here in the singular setting.
	\par 
	To begin, let us recall some basic facts about Bott-Chern and Aeppli cohomologies.
	Let $Y$ be a compact complex manifold of dimension $n$.
	We denote by $\Omega^{(p, q)}(Y)$ the vector space of $(p, q)$-differential forms on $Y$, $p, q \in \nat$, and define $\partial: \Omega^{(p, q)}(Y) \to \Omega^{(p + 1, q)}(Y)$, $\dbar: \Omega^{(p, q)}(Y) \to \Omega^{(p, q + 1)}(Y)$, as usual. 
	Recall that Bott-Chern cohomology, $H^{p, q}_{BC}(Y)$, is defined as
	\begin{equation}
		H^{p, q}_{BC}(Y)
		:=
		\frac{(\ker \partial: \Omega^{(p, q)}(Y) \to \Omega^{(p + 1, q)}(Y)) \cap (\ker \dbar: \Omega^{(p, q)}(Y) \to \Omega^{(p, q + 1)}(Y))}{{\rm{im}} \partial \dbar: \Omega^{(p - 1, q - 1)}(Y) \to \Omega^{(p, q)}(Y)}.
	\end{equation}
	Recall that Aeppli cohomology, $H^{p, q}_{A}(Y)$, is defined as
	\begin{equation}
		H^{p, q}_{A}(Y)
		:=
		\frac{\ker \partial \dbar : \Omega^{(p, q)}(Y) \to \Omega^{(p + 1, q + 1)}(Y)}{({\rm{im}} \partial: \Omega^{(p - 1, q)}(Y) \to \Omega^{(p, q)}(Y)) + ({\rm{im}} \dbar: \Omega^{(p, q - 1)}(Y) \to \Omega^{(p, q)}(Y))}.
	\end{equation}
	It is standard that for compact Kähler manifolds, the two cohomologies coincide.
	For $p, q = 0, \ldots, n$, we have the natural pairing 
	\begin{equation}\label{eq_int_pairing}
		\wedge : H^{p, q}_{BC}(Y) \times H^{n - p, n - q}_{A}(Y) \to \comp,
	\end{equation}
	given by the wedge product and integration.
	If $p: Y \to B$ is a holomorphic map between compact complex manifolds, then for $s := \dim Y - \dim B$, we have a natural map $p_* : H^{p, q}_{BC}(Y) \to H^{p - s, q - s}_{BC}(B)$, defined by the pairing (\ref{eq_int_pairing}) and the pull-back $p^*$.
	\par 
	The Bott-Chern cohomology can be generally defined for arbitrary complex analytic spaces $Y$ in the sense of \cite[Definition II.5.2]{DemCompl}, where one considers differential forms on $Y$ obtained by pullbacks of smooth differential forms through local embeddings of the space into complex vector spaces.
	Due to a theorem of Lelong, cf. \cite[Theorem III.2.7]{DemCompl}, the intersection pairing (\ref{eq_int_pairing}) can still be defined in this setting, and so the slope (\ref{eq_defn_slope_subm}) is well-defined.
	By resolving the singularities, we can extend the definition of the pushforward for maps between a compact complex analytic spaces $Y$ and a compact manifold $B$.
	\par 
	Now, let $E$ be a holomorphic vector bundle over $Y$.
	Using Chern-Weil theory, one can construct for any Hermitian metric $h^E$ on $E$ a corresponding Chern character form, $\ch(E, h^E)$, which is a $d$-closed form in $\oplus_{p = 0}^{+ \infty} \Omega^{(p, p)}(Y)$.
	Bott-Chern in \cite{BottChern} showed that the resulting class in Bott-Chern cohomology doesn't depend on the choice of the metric.
	This gives a definition of the Chern character, $\ch(E)$ of a vector bundle $E$ with values in Bott-Chern cohomology.
	\par 
	Bismut-Shu-Wei in \cite{BismutShuWei} generalized the definition of the Chern character with values in Bott-Chern cohomology for any coherent sheaf $\mathscr{E}$ on $Y$.
	If $\mathscr{E}$ has a finite locally free projective resolution (which is always the case if $Y$ is projective), this construction corresponds to the one given by the alternating sum of Chern characters of the resolution.
	By \cite[\S 8.6]{BismutShuWei}, the $(k, k)$-component of the Chern character, $\ch_k(\mathscr{E})$, we have
	\begin{equation}\label{eq_ch01}
		\ch_0(\mathscr{E}) = \rk{\mathscr{E}}, \qquad \ch_1(\mathscr{E}) = c_1(\det \mathscr{E}),
	\end{equation}
	where $\det \mathscr{E}$ is the Knudsen-Mumford determinant \cite{Knudsen1976}.
	Without entering into details of the construction, we mention that the absence of finite locally free projective resolutions of coherent sheaves for general complex manifolds is circumvented in \cite{BismutShuWei} by the use of so-called antiholomorphic superconnections, see \cite[Theorem 6.7]{BismutShuWei}.
	\par 
	Now, recall that for any proper holomorphic map $p : Y \to B$, and any coherent sheaf $\mathscr{E}$, Grauert theorem tells that the direct image sheaves $R^q p_* \mathscr{E}$, $q \in \nat$, are coherent.
	The main result of this section goes as follows.
	\begin{thm}\label{thm_rrg_asym}
		Let $Y$ be an irreducible compact complex analytic space, $L$ an arbitrary line bundle on $Y$ and $\mathscr{E}$ a coherent sheaf on $Y$.
		Let $p : Y \to B$ be a holomorphic map to a compact complex manifold $B$.
		Then for any $r \in \nat$, and $s := \dim Y - \dim B + r$, in the Bott-Chern cohomology
		\begin{equation}
			\lim_{k \to \infty} \frac{1}{k^s}
			\sum_{t = 0}^{\dim Y} (-1)^t \ch_r(R^t p_*(\mathscr{E} \otimes L^k) )
			=
			\frac{\rk{\mathscr{E}}}{s!} \cdot p_*(c_1(L)^s).
		\end{equation}
	\end{thm}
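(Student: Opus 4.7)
The plan is to reduce Theorem \ref{thm_rrg_asym} to the smooth case via a resolution of singularities, apply the Bismut-Shu-Wei Riemann-Roch-Grothendieck theorem on the resolution, and then isolate the top-order term in $k$ while showing that the corrections introduced by the singular nature of $Y$ are lower order.

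First, I would invoke Hironaka to produce a proper birational holomorphic map $\pi : \tilde{Y} \to Y$ with $\tilde{Y}$ a compact complex manifold, and set $\tilde{L} := \pi^* L$, $\tilde{\mathscr{E}} := \pi^* \mathscr{E}$, and $q := p \circ \pi : \tilde{Y} \to B$. Since $q$ is a proper holomorphic map between compact complex manifolds, the Bismut-Shu-Wei RRG theorem in Bott-Chern cohomology applies and yields
\begin{equation*}
\sum_{t \geq 0} (-1)^t \ch\bigl(R^t q_*(\tilde{\mathscr{E}} \otimes \tilde{L}^k)\bigr) = q_*\bigl(\ch(\tilde{\mathscr{E}}) \cdot e^{k \, c_1(\tilde{L})} \cdot \td(T_{\tilde{Y}/B})\bigr).
\end{equation*}
Extracting the $(r,r)$-component and expanding the exponential, the coefficient of $k^s$ on the right is $\frac{1}{s!} q_*\bigl(\rk{\mathscr{E}} \cdot c_1(\tilde{L})^s\bigr)$, since at the top order in $k$ only $\ch_0(\tilde{\mathscr{E}}) = \rk{\mathscr{E}}$ and $\td_0(T_{\tilde{Y}/B}) = 1$ contribute. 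Using $c_1(\tilde{L}) = \pi^* c_1(L)$ together with the projection formula for the birational morphism $\pi$, interpreted in the Bott-Chern cohomology of the analytic space $Y$ as set up in the excerpt, this equals $\frac{\rk{\mathscr{E}}}{s!} \cdot p_*( c_1(L)^s )$, which is the right-hand side of the asymptotic identity to be proved.

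Next, I would compare the left-hand sides. The Leray spectral sequence for $q = p \circ \pi$, combined with the projection formula for the line bundle $\tilde{L}$, allows one to express the difference
\begin{equation*}
\sum_t (-1)^t \ch_r\bigl(R^t q_*(\tilde{\mathscr{E}} \otimes \tilde{L}^k)\bigr) - \sum_t (-1)^t \ch_r\bigl(R^t p_*(\mathscr{E} \otimes L^k)\bigr)
\end{equation*}
(using additivity of the alternating Chern character on short exact sequences) as an alternating combination of $\ch_r(R^\bullet p_*(\mathscr{F}_\alpha \otimes L^k))$, where the sheaves $\mathscr{F}_\alpha$ on $Y$ arise either as the kernel or cokernel of the natural morphism $\mathscr{E} \to \pi_* \pi^* \mathscr{E}$, or as higher direct images $R^b \pi_* \tilde{\mathscr{E}}$, $b \geq 1$. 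In either case, $\mathscr{F}_\alpha$ is supported in a proper analytic subset of $Y$ of dimension $d < \dim Y$. One then shows by induction on $d$ (restricting to a resolution of $\mathrm{supp}\,\mathscr{F}$ and reapplying the same argument in strictly lower dimension) that
\begin{equation*}
\sum_t (-1)^t \ch_r\bigl(R^t p_*(\mathscr{F} \otimes L^k)\bigr) = o(k^s)
\end{equation*}
for every such $\mathscr{F}$, since the natural asymptotic exponent is bounded by $d + r - \dim B < \dim Y + r - \dim B = s$. Combining the three steps gives the asserted asymptotic.

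The main obstacle, as anticipated in the excerpt, lies in the singular setting: one must verify that the Bismut-Shu-Wei Chern character interacts correctly with the Leray spectral sequence, the projection formula, and the Bott-Chern pairings defined via local embeddings on complex analytic spaces. It is precisely for these compatibilities that Hironaka's vanishing of higher direct images for birational maps of smooth manifolds and the antiholomorphic superconnection framework of Bismut-Shu-Wei are both needed; once these are established, the reduction to the smooth RRG on $\tilde{Y}$ proceeds cleanly, and the leading-order extraction is elementary.
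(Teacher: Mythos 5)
Your proposal is correct and follows essentially the same strategy as the paper: resolve singularities, apply the Bismut--Shu--Wei RRG on the smooth resolution, use the derived-functor factorisation $Rq_* = Rp_* R\pi_*$ together with additivity of the alternating Chern character to express the discrepancy in terms of sheaves supported on lower-dimensional analytic subsets, and close the argument by induction on $\dim Y$. The paper spells out one step you leave implicit — a devissage of the error sheaves $R^b\pi_*\tilde{\mathscr{E}}$ and the cokernel of $\mathscr{E}\to R^0\pi_*\pi^*\mathscr{E}$ into pushforwards of ideal sheaves on genuine complex analytic subspaces, so that the induction hypothesis can be applied verbatim — but the underlying idea is the same.
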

	\begin{rem}
		Despite a huge amount of literature, we were not able to find the proof of this result under the stated hypothesizes (even for projective $Y, B$).
		For flat maps $p$ and relatively ample $L$, this result can be alternatively established using Knudsen-Mumford expansions, see \cite{Knudsen1976} and Phong-Ross-Sturm \cite[Theorem 3]{PhongRossSturm}.
		Note, however, that flatness doesn't pass through subfamilies, and there is no flatness	 assumption in Theorem \ref{thm_nonlinstab}.
	\end{rem}
	The proof of this result relies on the recent result of Bismut-Shu-Wei \cite{BismutShuWei} establishing the Riemann-Roch-Grothendieck theorem in Bott-Chern cohomology for arbitrary holomorphic maps between (\textit{smooth}!) complex manifolds.
	More precisely, the main result of \cite{BismutShuWei} says the following.
	\begin{thm}\label{thm_rrg}
		Let $p : Y \to B$ be a holomorphic map between compact complex manifolds.
		Then for any coherent sheaf $\mathscr{E}$ on $Y$, the following identity holds in Bott-Chern cohomology
		\begin{equation}
			\td(TB) \cdot \sum_{t = 0}^{\dim Y} (-1)^t \ch(R^t p_*(\mathscr{E}) )
			=
			p_* (
			\td(TY) \cdot \ch( \mathscr{E} )
			),
		\end{equation}
		where $\td(TB)$, $\td(TY)$ are the Todd classes.  
	\end{thm}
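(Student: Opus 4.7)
The plan is to adopt a two-fold reduction strategy: factor the map $p : Y \to B$ as $p = q \circ \iota$, where $\iota : Y \hookrightarrow Y \times B$ is the graph embedding (a closed immersion of compact complex manifolds) and $q : Y \times B \to B$ is the trivial projection, and prove the Riemann-Roch-Grothendieck identity separately for closed immersions and for submersions, combining the two by functoriality of the Chern character and of direct images. In parallel, I would first treat holomorphic vector bundles and then extend to arbitrary coherent sheaves using Block's antiholomorphic superconnection model already recalled in the excerpt.

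For the submersion $q$, I would apply Bismut's families superconnection formalism to the infinite-dimensional bundle over $B$ whose fiber at $b \in B$ is $\oplus_{t} H^t(q^{-1}(b), F|_{q^{-1}(b)})$, where $F$ is a Hermitian holomorphic vector bundle on $Y \times B$. Bismut's superconnection interpolates, as the scaling parameter $u$ varies from $0$ to $+\infty$, between a Chern-Weil representative of $q_*(\td(TY/B, h^{TY/B}) \cdot \ch(F, h^F))$ and the Chern character form of the holomorphic Hermitian connection on $R^* q_* F$ equipped with the fiberwise $L^2$-metric; the transgression between these two endpoints is explicit and, by the refined local families index theorem of \cite{BismutShuWei}, lies in the image of $\partial \dbar$, which yields the Bott-Chern identity for $q$. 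For the closed immersion $\iota$, I would invoke the Bismut-Lebeau immersion formula applied to the Koszul resolution of $\iota_* \mathscr{G}$ associated with the normal bundle $N = \iota^* T(Y \times B)/TY$: its Chern character in the semiclassical limit produces the Todd factor of $N$ needed to match both sides of the formula.

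For a general coherent sheaf $\mathscr{E}$, which in the compact complex but non-projective setting need not admit a finite resolution by locally free sheaves, I would invoke Block's model: represent $\mathscr{E}$ by a $\integ/2$-graded complex of smooth complex vector bundles equipped with an antiholomorphic superconnection whose cohomology is locally isomorphic to $\mathscr{E}$, so that Chern character forms and direct images are defined by Chern-Weil theory and by fiberwise Dolbeault-like complexes built from this superconnection. Additivity and multiplicativity of the Chern character under short exact sequences of superconnections then reduce the general sheaf statement to the locally free case already handled.

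The main obstacle is the Bott-Chern refinement throughout: the classical local families index theorem and the Bismut-Lebeau immersion formula produce equalities of Chern-Weil forms only in de Rham cohomology, whereas the target identity must hold in the strictly finer Bott-Chern cohomology, in which one only quotients by $\partial \dbar$-exact forms. This forces one to construct explicit secondary transgression forms — analytic torsion forms for the submersion and Koszul transgressions for the immersion — and to verify that, when the two halves are glued together to treat $p = q \circ \iota$, the combined secondary terms remain $\partial \dbar$-exact. The hypoelliptic Laplacian machinery of Bismut, extended in \cite{BismutShuWei} to antiholomorphic superconnections, is precisely the analytic framework that delivers this finer control, and it is what ultimately distinguishes the theorem from its de Rham analogue due to Borel-Serre.
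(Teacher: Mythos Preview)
The paper does not prove this theorem. Theorem~\ref{thm_rrg} is quoted verbatim as ``the main result of \cite{BismutShuWei}'' and is used as a black box in the proof of Theorem~\ref{thm_rrg_asym}; the only comment the paper offers on its proof is that it relies on Bismut's hypoelliptic Laplacian \cite{BismutBC13}. So there is no proof in the paper to compare your proposal against.

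As for the content of your sketch: the graph factorization $p = q \circ \iota$ is the classical route (Borel--Serre, SGA~6, and the K\"ahler analytic versions), and your description of the two halves is broadly accurate at the level of de Rham cohomology. But you have essentially identified the obstacle yourself without resolving it: gluing the secondary transgression forms for the submersion and for the immersion so that the total defect is $\partial\dbar$-exact is precisely the hard part, and it is not something one can wave through. The approach actually taken in \cite{BismutBC13}, \cite{BismutShuWei} does \emph{not} proceed by factoring through the graph and treating immersions and submersions separately; rather, the hypoelliptic deformation of the Laplacian is designed to handle an arbitrary proper holomorphic map $p$ directly, by deforming the fiberwise Hodge theory into a geometric object on the total space of $TY$ whose large-parameter limit localizes on $Y$ and produces the right-hand side, while the small-parameter limit gives the Chern character of the direct images. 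This direct treatment is what circumvents the gluing problem you flag. Your outline is therefore a plausible summary of the \emph{pre}-hypoelliptic state of the art, but it is not a proof strategy that closes without the very machinery you defer to in your last paragraph.
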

	\begin{proof}[Proof of Theorem \ref{thm_rrg_asym}]
		Remark first that for smooth manifolds $Y$, the result follows directly from Theorem \ref{thm_rrg} by (\ref{eq_ch01}) and the fact that the $0$-degree part of the Todd class is identity.
		If $\dim Y = 0$, then $Y$ is automatically smooth, and, hence, Theorem \ref{thm_rrg_asym} holds as stated.
		\par 
		We will argue by induction on the dimension of $Y$. 
		For this, we consider a resolution of singularities $f : \hat{Y} \to Y$ of $Y$.
		For any $q = 1, \ldots, \dim Y$, we define the sheaves $\mathscr{Q}_q$ on $Y$ as $\mathscr{Q}_q := R^q f_*  f^* \mathscr{E}$, where $f^* \mathscr{E} := f^{-1} \mathscr{E} \otimes_{f^{-1} \mathscr{O}_{Y}} \mathscr{O}_{\hat{Y}}$.
		We define the sheaf $\mathscr{Q}_0$ on $Y$ by the following short exact sequence 
		\begin{equation}\label{eq_q0_defn}
			0 
			\rightarrow
			\mathscr{E}
			\rightarrow
			R^0 f_* f^* \mathscr{E}
			\rightarrow
			\mathscr{Q}_0
			\rightarrow
			0.
		\end{equation}
		By Grauert theorem, the sheaves $\mathscr{Q}_q$, $q = 0, \ldots, \dim Y$, are coherent.
		Since the resolution of singularities is biholomorphic away from a subset of singular points of $Y$, and over the locally free locus of $\mathscr{E}$, by projection formula, cf. \cite[Exercise II.5.1d)]{HartsBook}, we have $R^0 f_* f^* \mathscr{E} = \mathscr{E}$, the supports of the sheafs $\mathscr{Q}_q$, $q = 0, \ldots, \dim Y$, are \textit{proper} analytic subsets of $Y$, which, by irreducibility of $Y$, have strictly smaller dimension than $Y$, cf. \cite[Proposition II.4.2.6]{DemCompl}.
		By this and the usual \textit{devissage} techniques, see \cite[Théorème 3.1.2]{EGA3}, cf. \cite[Proposition I.7.4]{HartsBook}, for any $q = 0, \ldots, \dim Y$, there is $r(q) \in \nat$, and complex analytic subspaces $\iota_{i, q}: Z_{i, q} \hookrightarrow Y$, with some ideal sheaves $\mathscr{J}_{i, q}$ on $Z_{i, q}$ and a filtration $\mathscr{F}_{i, q}$ of $\mathscr{Q}_q$, $i = 0, \ldots, r(q)$, $\mathscr{F}_{0, q} = \{0\}$, $\mathscr{F}_{r(q), q} = \mathscr{Q}_q$, $\mathscr{F}_{i - 1, q} \subset \mathscr{F}_{i, q}$, $i = 1, \ldots, r_q$, such that for any $i = 1, \ldots, r(q)$, we have $\mathscr{F}_{i, q} / \mathscr{F}_{i - 1, q} = \iota_{i, q, *}(\mathscr{J}_{i, q})$.
		We denote $p_{i, q} := p \circ \iota_{i, q}$, $i = 1, \ldots, r(q)$, $\hat{p} := p \circ f$. 
		We argue that for any $k \in \nat$, we have
		\begin{multline}\label{eq_ident_resol}
			\sum_{t = 0}^{\dim Y} (-1)^t
			\ch(R^t p_*(\mathscr{E} \otimes L^k) )
			=
			\sum_{t = 0}^{\dim Y} (-1)^t
			\ch(R^t \hat{p}_*(f^* \mathscr{E} \otimes f^* L^k) )
			\\
			-
			\sum_{t, u = 0}^{\dim Y} (-1)^{t + u} \sum_{i = 1}^{r(q)}
			\ch(R^t (p_{i, u})_*(\mathscr{J}_{i, u} \otimes \iota_{i, u}^* L^k ) ).
		\end{multline}
		Once (\ref{eq_ident_resol}) is established, Theorem \ref{thm_rrg_asym} would follow by induction, as the space $\hat{Y}$ is smooth, and so for the first summand on the right-hand side of (\ref{eq_ident_resol}), the smooth version of Theorem \ref{thm_rrg_asym} applies, and the second summand doesn't contribute to the asymptotics by induction hypothesis, as all $Z_{i, q}$ have strictly smaller dimensions than $Y$.
		\par 
		Now, let us establish (\ref{eq_ident_resol}). First of all, since in derived category, there is a canonical isomorphism between the functors $R \hat{p}_*$ and $R p_* R f_*$, cf. \cite[(3.13)]{BismutShuWei}, and the construction of Chern character, defined using derived category of coherent sheaves, factors through the $K$-theory of the derived category of coherent sheaves \cite[Theorem 8.11 and \S 8.9]{BismutShuWei}, we have 
		\begin{equation}\label{eq_der_desing}
			\sum_{t = 0}^{\dim Y} (-1)^t
			\ch(R^t \hat{p}_*(f^* \mathscr{E} \otimes f^*L^k) )
			=
			\sum_{t, u = 0}^{\dim Y} (-1)^{t + u}
			\ch(R^t p_*( R^u f_*( f^* \mathscr{E}  \otimes f^*L^k) ).
		\end{equation}
		Now, from the exact sequence (\ref{eq_q0_defn}), using again the fact that the construction of the Chern character passes through the formation of $K$-theory, we obtain 
		\begin{multline}\label{eq_q1_inc1}
			\sum_{t = 0}^{\dim Y} (-1)^{t}
			\ch(R^t p_*( R^0 f_*( f^* \mathscr{E} \otimes f^*L^k) )
			\\
			=
			\sum_{t = 0}^{\dim Y} (-1)^{t}
			\ch(R^t p_* (\mathscr{E} \otimes L^k) )
			+
			\sum_{t = 0}^{\dim Y} (-1)^{t}
			\ch(R^t p_*( \mathscr{Q}_0 \otimes L^k )).
		\end{multline}
		Similarly, for any $q = 0, \ldots, \dim Y$, we obtain 
		\begin{equation}\label{eq_q1_inc2}
			\sum_{t = 0}^{\dim Y} (-1)^t
			\ch(R^t p_*( \mathscr{Q}_q \otimes L^k ))
			=
			\sum_{t = 0}^{\dim Y} (-1)^t
			\sum_{i = 0}^{r(q)}
			\ch(R^t p_*(\iota_{i, q, *} (\mathscr{J}_{i, q}) \otimes L^k )).
		\end{equation}
		Remark, however, that since $\iota_{i, q}$ is a closed embedding, $\iota_{i, q, *}$ is an exact functor, so we have $R^v \iota_{i, q, *} = 0$ for $v = 1, \ldots, \dim Y$, and $R^0 \iota_{i, q, *} = \iota_{i, q, *}$.
		Moreover, by the projection formula, cf. \cite[Exercise II.5.1d)]{HartsBook}, we have $R^0 \iota_{i, q, *} (\mathscr{J}_{i, q}) \otimes L^k  = R^0 \iota_{i, q, *} (\mathscr{J}_{i, q} \otimes \iota_{i, q}^* L^k)$.
		In particular, for any $t, q = 0, \ldots, \dim Y$, $i = 1, \ldots, r(q)$, we can write
		\begin{equation}\label{eq_q1_inc22}
			\ch(R^t p_*(\iota_{i, q, *} (\mathscr{J}_{i, q}) \otimes L^k ))
			=
			\sum_{v = 0}^{\dim Y} (-1)^v
			\ch(R^t p_*( R^v \iota_{i, q, *} (\mathscr{J}_{i, q} \otimes \iota_{i, q}^* L^k) )).
		\end{equation}
		But using the same argument as in (\ref{eq_der_desing}), we have
		\begin{equation}\label{eq_q1_inc222}
			\sum_{t, v = 0}^{\dim Y} (-1)^{t + v}
			\ch(R^t p_*( R^v \iota_{i, q, *} (\mathscr{J}_{i, q} \otimes \iota_{i, q}^* L^k) ))
			=
			\sum_{t = 0}^{\dim Y} (-1)^{t}
			\ch(R^t (p_{i, q})_*(\mathscr{J}_{i, q} \otimes \iota_{i, q}^* L^k ) ).
		\end{equation}
		Now, a combination of (\ref{eq_der_desing}), (\ref{eq_q1_inc1}), (\ref{eq_q1_inc2}), (\ref{eq_q1_inc22}) and (\ref{eq_q1_inc222}), gives us (\ref{eq_ident_resol}).
	\end{proof}
	Now, let us finally establish an application of Theorem \ref{thm_rrg_asym} towards the study of Harder-Narasimhan slopes of direct images. 
	We fix an irreducible compact complex analytic space $Y$ of dimension $k + m$, $k \geq 0$, with a surjective holomorphic map $\pi : Y \to B$.
	Let $\omega_B$ be a Gauduchon Hermitian form on $B$.
	Let $L$ be a relatively ample line bundle on $Y$.
	Recall that in (\ref{eq_defn_slope_subm}), we defined the slope of $Y$, and before Theorem \ref{thm_singleton}, we defined the slopes of coherent sheafs.
	\begin{lem}\label{lem_asympt_slope_calc}
		The following identity holds
		$
			\mu(Y) \cdot \int_B [\omega_B^m]
			=
			\lim_{k \to \infty} \mu(R^0 \pi_* L^k) / k.
		$
	\end{lem}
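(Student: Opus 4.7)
To avoid the notational clash in the statement, write $n$ for the relative dimension of $\pi$, so that $\dim Y = n + m$, and reserve $k$ for the tensor power. The strategy is to compute the asymptotics of the numerator $\deg(R^0 \pi_* L^k)$ and the denominator $\rk{R^0 \pi_* L^k}$ separately, using Theorem \ref{thm_rrg_asym} with $r = 1$ and $r = 0$ respectively, and then divide.

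First, since $L$ is relatively ample, by relative Serre vanishing for coherent sheaves on the (possibly singular) analytic space $Y$, for $k$ sufficiently large we have $R^t \pi_*(L^k) = 0$ for all $t \geq 1$. In particular, for such $k$, the alternating sums appearing on the left-hand side of Theorem \ref{thm_rrg_asym} reduce to the single term $\ch_r(R^0 \pi_* L^k)$. Applying the theorem with $\mathscr{E} = \mathcal{O}_Y$ and $r = 0$ (so $s = n$), and using $\ch_0 = \rk{}$ from (\ref{eq_ch01}), we get
\begin{equation}
	\rk{R^0 \pi_* L^k} \sim \frac{k^n}{n!} \cdot \pi_*(c_1(L)^n) \qquad \text{in } H^{0,0}_{BC}(B) = \comp.
\end{equation}
Applying the same theorem with $r = 1$ (so $s = n+1$), and using $\ch_1(\mathscr{E}) = c_1(\det \mathscr{E})$ from (\ref{eq_ch01}), we obtain
\begin{equation}
	c_1(\det R^0 \pi_* L^k) \sim \frac{k^{n+1}}{(n+1)!} \cdot \pi_*(c_1(L)^{n+1}) \qquad \text{in } H^{1,1}_{BC}(B).
\end{equation}

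Next I would unfold the definition of the pushforward $\pi_*$ via the pairing (\ref{eq_int_pairing}). By construction, $\pi_*(c_1(L)^n) \in H^{0,0}_{BC}(B)$ is characterized (assuming $B$ connected) by its pairing against $[\omega_B^m] \in H^{m,m}_A(B)$:
\begin{equation}
	\pi_*(c_1(L)^n) \cdot \int_B [\omega_B^m] = \int_Y [c_1(L)^n] \cdot [\omega_B^m],
\end{equation}
and likewise, pairing $\pi_*(c_1(L)^{n+1}) \in H^{1,1}_{BC}(B)$ against $[\omega_B^{m-1}] \in H^{m-1,m-1}_A(B)$ yields
\begin{equation}
	\int_B \pi_*(c_1(L)^{n+1}) \cdot [\omega_B^{m-1}] = \int_Y [c_1(L)^{n+1}] \cdot [\omega_B^{m-1}].
\end{equation}
Combining these with the definition $\deg(R^0 \pi_* L^k) = \int_B c_1(\det R^0 \pi_* L^k) \cdot [\omega_B^{m-1}]$ gives the two asymptotics
\begin{equation}
	\rk{R^0 \pi_* L^k} \sim \frac{k^n}{n!} \cdot \frac{\int_Y [c_1(L)^n] \cdot [\omega_B^m]}{\int_B [\omega_B^m]},
	\quad
	\deg(R^0 \pi_* L^k) \sim \frac{k^{n+1}}{(n+1)!} \int_Y [c_1(L)^{n+1}] \cdot [\omega_B^{m-1}].
\end{equation}

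Finally, dividing and using the definition (\ref{eq_defn_slope_subm}) of $\mu(Y)$, I would conclude
\begin{equation}
	\frac{\mu(R^0 \pi_* L^k)}{k}
	=
	\frac{\deg(R^0 \pi_* L^k)}{k \cdot \rk{R^0 \pi_* L^k}}
	\longrightarrow
	\frac{1}{n+1} \cdot \frac{\int_Y [c_1(L)^{n+1}] \cdot [\omega_B^{m-1}]}{\int_Y [c_1(L)^n] \cdot [\omega_B^m]} \cdot \int_B [\omega_B^m]
	=
	\mu(Y) \cdot \int_B [\omega_B^m],
\end{equation}
which is the desired identity. The content of the argument is entirely contained in Theorem \ref{thm_rrg_asym} and relative Serre vanishing; the only non-routine point is the careful translation between the abstract pushforward $\pi_*$ in Bott-Chern cohomology and the concrete intersection integrals on $Y$, which is the main bookkeeping obstacle but is forced by the characterization of $\pi_*$ through the non-degenerate pairing (\ref{eq_int_pairing}).
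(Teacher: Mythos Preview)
Your proposal is correct and follows essentially the same approach as the paper: relative Serre vanishing kills the higher direct images, then Theorem \ref{thm_rrg_asym} applied with $r=0$ and $r=1$ (together with (\ref{eq_ch01})) gives the asymptotics of rank and degree, and one divides. The paper's proof is terser and leaves the unpacking of $\pi_*$ via the pairing (\ref{eq_int_pairing}) implicit, but the content is identical.
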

	\begin{proof}
		By Serre vanishing theorem, the higher direct images $R^v \pi_* L^k$, $v \geq 1$, vanish.
		By (\ref{eq_ch01}), we conclude that 
		\begin{equation}
			\mu(R^0 \pi_* L^k) 
			=
			\frac{\int_Y [\ch_1(R^0 \pi_* L^k)] \cdot [\omega_B^{m-1}]}{\ch_0(R^0 \pi_* L^k)}.
		\end{equation}
		The result now follows directly from Theorem \ref{thm_rrg_asym}.
	\end{proof}
	\begin{prop}\label{prop_asymp_semist}
		The vector bundles $E_k$ are asymptotically semistable if and only if $\eta_{\min}^{HN} = \eta_{\max}^{HN}$.
		Moreover, if $E_k$ are asymptotically semistable, then for any subsheaves $\mathcal{F}_k$ of $E_k$, $\rk{\mathcal{F}_k} > 0$, and any $\epsilon > 0$, for $k$ big enough, we have $\mu(\mathcal{F}_k) \leq \mu(E_k) + \epsilon k$.
	\end{prop}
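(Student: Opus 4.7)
\noindent\textbf{Proof plan for Proposition \ref{prop_asymp_semist}.} The strategy rests on the standard duality between the extremal Harder-Narasimhan slopes of $E_k$ and slopes of quotients/subsheaves: $\mu_{\min}^k$ is the infimum of $\mu(\mathcal{Q})$ over torsion-free quotients $\mathcal{Q}$ of $E_k$ of positive rank, and is \emph{attained} by the last quotient $E_k/\mathscr{E}_{s-1}$ of the Harder-Narasimhan filtration (\ref{eq_HN_filt}) of $E_k$; dually, $\mu_{\max}^k$ is the supremum of $\mu(\mathcal{F})$ over subsheaves $\mathcal{F}$ of positive rank, attained by $\mathscr{E}_1$. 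A second ingredient is that the measures $\eta_k^{HN}$ are supported in a fixed compact interval (by Proposition \ref{prop_bound_slope} for the upper bound and the existence of the limit $\eta_{\min}^{HN}$ in Theorem \ref{thm_conv_meas} for the lower one), so that weak convergence from Theorem \ref{thm_conv_meas} upgrades to convergence of first moments, giving $\mu(E_k)/k = \int x\,d\eta_k^{HN}(x) \to \int x\,d\eta^{HN}(x)$.

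For the forward direction, I apply the asymptotic semistability hypothesis to the distinguished quotient $\mathcal{Q}_k := E_k/\mathscr{E}_{s-1}$, whose slope is exactly $\mu_{\min}^k$. This gives $\mu_{\min}^k \geq (1-\epsilon)\mu(E_k)$ for $k$ large; dividing by $k$, passing to the limit and letting $\epsilon\to 0$ yields $\eta_{\min}^{HN} \geq \int x\, d\eta^{HN}$. Combined with the always-valid chain $\eta_{\min}^{HN} \leq \essinf\eta^{HN} \leq \int x\, d\eta^{HN} \leq \esssup\eta^{HN} = \eta_{\max}^{HN}$ from Theorem \ref{thm_conv_meas}, the mean of $\eta^{HN}$ coincides with its essential infimum, which forces $\eta^{HN}$ to be a Dirac mass, so $\eta_{\min}^{HN}=\eta_{\max}^{HN}$.

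For the reverse direction and the moreover part, assume $\eta_{\min}^{HN} = \eta_{\max}^{HN}$; call the common value $\eta_\infty$. The squeezing $\mu_{\min}^k \leq \mu(E_k) \leq \mu_{\max}^k$, divided by $k$, forces $\mu(E_k)/k\to\eta_\infty$. For any torsion-free quotient $\mathcal{Q}_k$ (one may always replace $\mathcal{Q}_k$ by its torsion-free part, which only increases the slope), the general lower bound $\mu(\mathcal{Q}_k)\geq\mu_{\min}^k$ gives
\[
	\big(\mu(E_k)-\mu(\mathcal{Q}_k)\big)/k \;\leq\; \big(\mu(E_k)-\mu_{\min}^k\big)/k \;\longrightarrow\; 0
\]
\emph{uniformly} in the choice of $\mathcal{Q}_k$, which is the desired additive asymptotic statement and easily implies the multiplicative form $\mu(\mathcal{Q}_k)\geq (1-\epsilon)\mu(E_k)$ for $k$ large. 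The moreover part is entirely dual: for any subsheaf $\mathcal{F}_k$ of positive rank the bound $\mu(\mathcal{F}_k)\leq\mu_{\max}^k$ combined with $\mu_{\max}^k/k\to\eta_\infty$ gives $(\mu(\mathcal{F}_k)-\mu(E_k))/k\to 0$, hence $\mu(\mathcal{F}_k) \leq (1+\epsilon)\mu(E_k)$ for $k$ large.

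The only delicate point is the passage from the additive to the multiplicative form of the inequality. This is immediate when $\eta_\infty \neq 0$, since $\mu(E_k)$ then grows linearly in $k$ and the $o(k)$ defect is dominated by $\epsilon|\mu(E_k)|$; when $\eta_\infty=0$ the statement is read in its additive form, which is exactly what is produced above. All other ingredients—the realisation of $\mu_{\min}^k$ and $\mu_{\max}^k$ by HN steps, the torsion-free reduction for quotients, and convergence of first moments from weak convergence of uniformly compactly supported measures—are standard, so this is where the essential content of the argument lies.
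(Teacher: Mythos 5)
Your proof is correct and follows the paper's own route: both arguments rest on the characterization of the extremal Harder-Narasimhan slopes as infimum over positive-rank quotients and supremum over positive-rank subsheaves (the paper's display (\ref{eq_max_min_slope})), on the convergence of first moments $\mu(E_k)/k\to\int x\,d\eta^{HN}$ (the paper's (\ref{eq_mu_ek_asymp}), which as you note uses uniform compact support of $\eta_k^{HN}$ coming from Proposition~\ref{prop_bound_slope} and Theorem~\ref{thm_conv_meas}), and on playing $k\to\infty$ against $\epsilon\to 0$. The one thing you make explicit that the paper leaves tacit is the multiplicative-versus-additive normalization; do note, though, that your fallback for $\eta_\infty=0$ --- reading the conclusion ``in its additive form'' --- is not literally the inequality $\mu(\mathcal{Q}_k)\geq(1-\epsilon)\mu(E_k)$ used to define asymptotic semistability in Theorem~\ref{thm_singleton}, a subtlety the paper's proof also passes over in silence.
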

	\begin{proof}
		Remark first that the maximal and the minimal slopes satisfy
		\begin{equation}\label{eq_max_min_slope}
			\begin{aligned}
				&
				\mu_{\max}^k
				=
				\sup \Big\{ 
					\mu(\mathcal{F}_k)
					:
					\mathcal{F}_k
					\text{ is a subsheaf of } E_k
				\Big\}
				,
				\\
				&
				\mu_{\min}^k
				=
				\inf \Big\{ 
					\mu(\mathcal{Q}_k)
					:
					\mathcal{Q}_k
					\text{ is a quotient sheaf of } E_k
				\Big\}.
			\end{aligned}
		\end{equation}
		Now, by Theorem \ref{thm_conv_meas}, as $k \to \infty$, we have 
		\begin{equation}\label{eq_mu_ek_asymp}
			\lim_{k \to \infty} \frac{\mu(E_k)}{k} = \int_{\real} x d \eta^{HN}(x).
		\end{equation}
		From (\ref{eq_max_min_slope}) and (\ref{eq_mu_ek_asymp}), we see that $E_k$ are asymptotically semistable if and only if $\eta_{\min}^{HN} = \esssup \eta^{HN}$.
		However, by Theorem \ref{thm_conv_meas}, $\esssup \eta^{HN}$ coincides with $\eta_{\max}^{HN}$, which finishes the proof of the first part of the theorem.
		The proof of the second statement of Proposition \ref{prop_asymp_semist} follows directly by (\ref{eq_max_min_slope}) and the first part.
	\end{proof}
	\begin{rem}
		From \cite[Proposition 5.1]{FinHNI}, cf. also \cite{XuZhuPosCM}, we know that if $\dim B = 1$, then $\essinf \eta^{HN}$ coincides with $\eta_{\min}^{HN}$.
		The above proof shows that for $\dim B = 1$, the condition on the subsheaves from Proposition \ref{prop_asymp_semist} is equivalent to asymptotic semistability.
	\end{rem}
	\begin{proof}[Proof of Theorem \ref{thm_singleton}]
		Follows immediately from Theorem \ref{thm_obs} and Proposition \ref{prop_asymp_semist}.
	\end{proof}

	\begin{prop}\label{prop_mu_min_bnd}
		For any complex analytic subspace $Y$ of $X$ as in Theorem \ref{thm_nonlinstab}, the following bound holds
		$\mu(Y) \cdot \int_B [\omega_B^m] \geq \eta_{\min}^{HN}$. 
	\end{prop}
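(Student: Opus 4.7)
The plan is to realize the restriction $\mathcal{Q}_k := R^0 \pi|_{Y,*} (L|_Y^k)$ as a quotient sheaf of $E_k$ for $k$ large, apply the standard lower bound on slopes of quotients by the minimal Harder-Narasimhan slope, and then take the asymptotic limit using Lemma \ref{lem_asympt_slope_calc} together with Theorem \ref{thm_conv_meas}.

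First, denote by $\iota : Y \hookrightarrow X$ the inclusion, and by $\mathcal{I}_Y$ the ideal sheaf of $Y$ in $X$. The short exact sequence
\begin{equation*}
0 \to \mathcal{I}_Y \otimes L^k \to L^k \to \iota_*(L|_Y^k) \to 0
\end{equation*}
on $X$ gives, upon pushing forward by $\pi$, a long exact sequence whose relevant portion reads
\begin{equation*}
E_k \to \mathcal{Q}_k \to R^1 \pi_*(\mathcal{I}_Y \otimes L^k).
\end{equation*}
Since $L$ is relatively ample with respect to $\pi$, Serre vanishing applied to $\pi$ yields $R^1 \pi_*(\mathcal{I}_Y \otimes L^k) = 0$ for all sufficiently large $k$, so the restriction map $E_k \twoheadrightarrow \mathcal{Q}_k$ is surjective. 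As $\pi|_Y : Y \to B$ is surjective and $L|_Y$ is relatively ample with respect to $\pi|_Y$, the sheaf $\mathcal{Q}_k$ is coherent of positive rank for $k$ large.

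Next, recall the standard fact from Harder-Narasimhan theory: for any torsion-free quotient sheaf $\mathcal{Q}$ of a torsion-free coherent sheaf $\mathscr{E}$ with $\rk{\mathcal{Q}} > 0$, one has $\mu(\mathcal{Q}) \geq \mu_{\min}(\mathscr{E})$, as follows by applying this bound to each quotient $\mathscr{E}/\mathscr{E}_{i}$ of the Harder-Narasimhan filtration. Applied to the surjection $E_k \twoheadrightarrow \mathcal{Q}_k$ (after replacing $\mathcal{Q}_k$ by its torsion-free quotient, which has the same rank and slope), we obtain
\begin{equation*}
\mu(\mathcal{Q}_k) \geq \mu_{\min}^k
\end{equation*}
for all $k$ large enough.

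Finally, divide by $k$ and let $k \to \infty$. On the left-hand side, Lemma \ref{lem_asympt_slope_calc}, applied to the irreducible compact complex analytic space $Y$ with the surjective holomorphic map $\pi|_Y : Y \to B$ and the relatively ample line bundle $L|_Y$, yields
\begin{equation*}
\lim_{k \to \infty} \frac{\mu(\mathcal{Q}_k)}{k} = \mu(Y) \cdot \int_B [\omega_B^m].
\end{equation*}
On the right-hand side, Theorem \ref{thm_conv_meas} gives $\lim_{k \to \infty} \mu_{\min}^k/k = \eta_{\min}^{HN}$. Combining the two limits finishes the proof. The only nontrivial ingredients are the Serre vanishing step (which requires nothing beyond relative ampleness of $L$) and the verification that Lemma \ref{lem_asympt_slope_calc} applies to the singular space $Y$ — this is already built into the statement of that lemma, so no additional work is needed. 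I do not anticipate a serious obstacle.
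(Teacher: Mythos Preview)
Your proof follows the same route as the paper's: the ideal-sheaf short exact sequence tensored by $L^k$, relative Serre vanishing for surjectivity of $E_k \to \mathcal{Q}_k$, the bound $\mu(\mathcal{Q}_k) \geq \mu_{\min}^k$ from (\ref{eq_max_min_slope}), and the passage to the limit via Lemma~\ref{lem_asympt_slope_calc} and Theorem~\ref{thm_conv_meas}. The only quibble is your parenthetical about the torsion-free quotient having ``the same slope'' --- this is not true in general and is in any case unnecessary, since (\ref{eq_max_min_slope}) already applies to arbitrary positive-rank quotients.
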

	\begin{proof}
		Consider the following short exact sequence of sheaves associated with $Y$
		\begin{equation}\label{eq_str_sh_y}
			0 
			\rightarrow
			\mathscr{J}_Y
			\rightarrow
			\mathscr{O}_X
			\rightarrow
			\iota_* \mathscr{O}_Y
			\rightarrow
			0,
		\end{equation}	 
		where $\mathscr{J}_Y$ is the ideal sheaf of $Y$, consisting of local holomorphic functions on $X$, vanishing along $Y$, and $\mathscr{O}_Y$ is the structure sheaf of $Y$ associated with the reduced scheme structure of $Y$, i.e. defined by (\ref{eq_str_sh_y}).
		By considering a long exact sequence of direct images associated with (\ref{eq_str_sh_y}) and the map $\pi$, and using Serre vanishing theorem, we conclude that the restriction map $R^0 \pi_* L^k \to R^0 \pi|_{Y, *} L|_Y^k$ is surjective. 
		Then, in the notations of (\ref{eq_max_min_slope}), we have $\mu_{\min}^k \leq \mu(R^0 \pi|_{Y, *} L|_Y^k)$.
		We deduce Proposition \ref{prop_mu_min_bnd} from Lemma \ref{lem_asympt_slope_calc} by diving by $k$ and passing to the limit $k \to \infty$.
	\end{proof}
	\begin{proof}[Proof of Theorem \ref{thm_nonlinstab}]
		First of all, by Lemma \ref{lem_asympt_slope_calc}, we conclude that 
		\begin{equation}\label{eq_calc_mu_x_tot}
			\lim_{k \to \infty} \mu(E_k)/k = \mu(X) \cdot \int_B [\omega_B^m]  \geq \eta_{\min}^{HN}.
		\end{equation}
		Moreover, since $\esssup \eta^{HN}$ coincides with $\eta_{\max}^{HN}$ by \cite[Theorem 1.1]{FinHNI}, by (\ref{eq_mu_ek_asymp}), we conclude that the equality in the above inequality holds if and only if $\eta_{\min}^{HN} = \eta_{\max}^{HN}$, i.e. when $E_k$ is asymptotically semistable by Proposition \ref{prop_asymp_semist}.
		In particular, if $E_k$ is asymptotically semistable, then by $\eta_{\min}^{HN} = \eta_{\max}^{HN}$, Proposition \ref{prop_mu_min_bnd} and (\ref{eq_calc_mu_x_tot}), we establish the first part of Theorem \ref{thm_nonlinstab}.
		\par 
%		\begin{sloppypar}
		Let us now establish the second part.
		By a reformulation of the result of Xu-Zhuang \cite[Lemma 2.26 and Proposition 2.28]{XuZhuPosCM} from \cite[(1.5)]{FinHNI}, we have
		\begin{equation}\label{eq_num_form}
			\eta_{\min}^{HN}
			=
			\inf_{C \subset X} \mu(C)  \cdot \int_B [\omega_B^m],
		\end{equation}
		where $C$ runs over all irreducible curves in $X$, with project surjectively to $B$.
		In particular, from (\ref{eq_num_form}), we conclude that
		\begin{equation}\label{eq_calc_mu_x_tot1}
			\eta_{\min}^{HN}
			\geq
			\inf_{Y \subset X} \mu(Y)  \cdot \int_B [\omega_B^m],
		\end{equation}
		where $Y$ are as in Theorem \ref{thm_nonlinstab}.
		A combination of (\ref{eq_calc_mu_x_tot}) and (\ref{eq_calc_mu_x_tot1}) shows that if $\inf_{Y \subset X} \mu(Y) = \mu(X)$, then both (\ref{eq_calc_mu_x_tot}) and (\ref{eq_calc_mu_x_tot1}) are actually equalities.
		By the remark after (\ref{eq_calc_mu_x_tot}), this implies that $E_k$ is asymptotically semistable, which finishes the proof.
%		\end{sloppypar}
	\end{proof}
	\begin{rem}
		It is interesting to know if the second part of Theorem \ref{thm_nonlinstab} continues to hold for $\dim B > 1$.
		There are several potential pitfalls for that.
		First, since $B$ is not necessarily projective, there might be very few analytic subspaces $Y \subset X$, projecting surjectively to $B$.
		Second, even for projective $B$, the main result of \cite[Theorem 1.4]{FinHNI} implies that the analogue of the bound (\ref{eq_num_form}) becomes tight if one considers among $Y$ all subcurves in $X$ projecting to generic curves over the base.
		But it seems that there is much more curves like that than analytic subspaces projecting surjectively to the base.
	\end{rem}

\bibliography{bibliography}

		\bibliographystyle{abbrv}

\Addresses

\end{document}